\newtheorem{thm}{Theorem}[section]
\newtheorem{prop}{Proposition}[section]
\newtheorem{lem}{Lemma}[section]
\newtheorem{defn}{Definition}[section]
\newtheorem{rmk}{Remark}
\numberwithin{equation}{section}
\begin{document}

	\setcounter{page}{1}
	\title[]{Normal categories of semigroup of order-preserving transformations on a finite chain}
	\begin{abstract}
	K. S. S. Nambooripad intoduced nornal categories to enable to describe the structure of regular semigroups fully.  In this paper we describe the ideal categories of the regular semigroup $OX_n,$ of non-invertible order-preserving transformations on a finite chain $X_n=\{1\leq 2\leq \cdots \leq n \}$ which are normal categories.
	 Further it is shown that the principal left ideal category  of $OX_n$ as the  power set category $P_o(X_n)$ of $OX_n$ and the principal right ideal category as $\prod_o(X_n)$ category of ordered partitions of $X_n$ and described the cone semigroup $T\mathscr L(OX_n)$ and prove that it  is isomorphic to $OX_n.$

	\end{abstract}
	\author[K. K. Sneha, P. G. Romeo]{K. K. Sneha$^{1}$,     P. G. Romeo$^{2}$}
	
	\address{$^{1}$ Department of Mathematics, Cochin University of Science And Technology, Kerala, India.}
	\email{romeo\_ parackal@yahoo.com
	}
	
	\address{$^{2}$ Department of Mathematics, Cochin University of Science And Technology, Kerala, India.}
	\email{snehamuraleedharan0007@gmail.com
	}
	\thanks{The first author wishes to acknowledge the financial support of the Council for Scientific and
		Industrial Research, New Delhi (via JRF and SRF) in the preparation of this article.}

	\subjclass[2010]{20M10, 20M12, 18E05.
	}
	\keywords{ semigroup, Normal category, Normal cones, Full transformation semigroup, H-functors.}

	\date{Received: xxxxxx; Revised: yyyyyy; Accepted: zzzzzz.
		\newline \indent $^{1}$ Corresponding author}
	
	\baselineskip=21pt
	\vspace*{-1 cm}
	\maketitle
	\section{Introduction}
		There are mainly two established procedures for analyzing the structure of  arbitrary regular semigroups. The first approach was initiated by W. D. Munn via fundamental inverse semigroups during  1966-1970. The idempotent elements are considered as basic objects and based on which the structure theory has developed. From this perspective, K. S. S. Nambooripad introduced the notion of a regular biordered set in the celebrated work\cite{ams} and proved that the set of idempotents of a regular semigroup is a regular biordered set.
		Later on, Easdown proved that all biordered sets arise as biordered sets of semigroups \cite{easdown}. Nambooripad's theory of biordered sets played a significant role in the structure theory of arbitrary semigroups.
		\par The second approach was initiated by T. E. Hall. In this approach, the ideals of the semigroup are the fundamental object and based on the relation that existed between the left and right ideals of the semigroup, the structure theory has been build. Hall proved that, for a regular semigroup $S$, there is a representation of $S$ by pairs of order-preserving transformations on the partially ordered sets $I=S/\mathscr R$ and $\Lambda= S/\mathscr L$ where $\mathscr L\text{ and }\mathscr R$ are Green's relations on $S.$ He showed that this  representation is faithful if and only if $S$ is a fundamental regular semigroup. Moreover, Hall completely identified the maximal fundamental regular semigroup associated with $S.$	Later on, P. A. Grillet refined Hall's theory by introducing the notion of cross-connections of partially ordered sets and characterized the partially ordered sets involved as regular partially ordered sets. Grillet defines a cross-connection between two regular partially ordered sets $I$ and $\Lambda$ as a pair of order-preserving mappings 
		$$ \Gamma: I\rightarrow \Lambda^* \textrm{ and }\Delta: \Lambda\rightarrow I^*$$ 
		satisfying certain axioms, where $I^*[\Lambda^*]$ denotes the dual of $I[\Lambda].$  In \cite{pag}, Grillet explicitly described the relationship between the partially ordered sets of the principal ideals of a given regular semigroup under inclusion. Moreover,
		Grillet shows that a pair of regular partially ordered sets arises from a regular semigroup $S$ as $S/\mathscr L$ and $S/\mathscr R$ if
		only if there is a cross-connection between them.  One of the significant	limitations of this method is that a regular semigroup and its full regular subsemigroup induce the same cross-connection. This shows that one can not obtain  the complete semigroup. Later, Nambooripad  identified this limitation and observed that the data provided by partially ordered sets are insufficient to characterize a arbitrary regular semigroups completely. 
		\par 
		In the early 1990's Nambooripad generalized the cross-connection theory by considering certain categories, which he called normal categories, and described the structure of regular semigroups \cite{kn}. These are the categories of principal left ideals $\mathscr L(S)$ and principal right ideals $\mathscr R(S)$ of a regular semigroup $S$.  A cross-connection between two normal categories $\mathcal C$ and $\mathcal D $ is a local isomorphism $ \Gamma : \mathcal D\to N^*\mathcal C$ where
		$N^*\mathcal C$ is the normal dual of the category $\mathcal C$. A cross-connection $\Gamma$ determines a cross-connection
		semigroup; conversely, every regular semigroup is isomorphic to a cross-connection semigroup for a suitable cross-connection. 
		\par
		Nambooripad's theory is very abstract and includes many technical details.  Currently  many researchers are working in this area and many research articles 
		are being published  (See.\cite{com},\cite{pa},\cite{am},\cite{ah},\cite{pz},\cite{pgr},\cite{az},\cite{np}).
		 This paper, considers studying  the structural properties of the semigroup $OX_n$ of order-preserving transformations  on a finite chain $X_n.$ 
		\par
		
		This article is organized as follows. In section \ref{s2}, we discuss the important concepts and results regarding the general theory of cross-connection, proposed by K. S. S. Nambooripad and also the structural properties of the semigroup $OX_n$. In section \ref{s4}, we characterize the principal left ideal category. 
		
\section{Preliminaries }\label{s2}
In the sequel, we recall the basic concepts and definitions in the general theory of cross-connection, introduced by K.S.S.Nampooripad. We assume  familiarity with the basic concepts in semigroup theory and category theory, for a detailed discussion see :\cite{ah}\cite{pag}\cite{hall}\cite{jmh} for semigroup theory and \cite{Mac} for basic category theory.	Throughout this paper, we write transformation to the right of their argument and take the composition from left to right. \par
  
For  category $\mathcal C$,
$v\mathcal C$ denotes the set of objects of $\mathcal C$ and $\mathcal C(a,b)$ the morphisms from $a$ to $b$. We  assume that categories under consideration are categories with subobjects. 
In a category $(\mathcal C, \mathcal P)$ with subobjects, morphisms in $\mathcal P$ are  called inclusions. If $c'\to c$ is an inclusion, we write $c'\subseteq c$ and denotes this inclusion by $j^c_{c'}$. An inclusion $j^c_{c'}$ splits if there exists $q:c\to c'\in \mathcal C$ such that $j^c_{c'}q = 1_{c'}$ and the morphism $q$ is called a retraction. A normal factorization of a morphism $f\in \mathcal C(c,d)$ is a factorization of the form $f=quj$ where $q:c\to c'$ is a retraction, $u: c'\to d'$ is an isomorphism and $j= j^d_{d'}$ is an inclusion where $c', d'\in v\mathcal C$ with $c'\subseteq c,\; d'\subseteq d$. The morphism $qu$ is known as the epimorphic component of $f$ and is denoted by $f^{\circ}$.


\begin{defn}\emph{(\cite{az}\cite{kn})\label{d1}}
	Let $\mathcal C$ be a category with subobjects and $d\in v\mathcal C$.  A map $\gamma: v\mathcal C \to \mathcal C$ is called a  cone from the base $v\mathcal C$ to the vertex $d$ if 
	\begin{enumerate}[label=\emph{(\arabic*)}]
		\item $\gamma(c)\in \mathcal C(c,d)$ for all $c\in v\mathcal C$
		\item if $c\subseteq c'$ then $j^{c'}_c\gamma(c') = \gamma(c)$
	\end{enumerate}
\end{defn}

For a  cone $\gamma$ denote by $c_\gamma$ the vertex of $\gamma$ and for $c\in v\mathcal C$, the morphism $\gamma(c): c\to c_\gamma$ is called the component of $\gamma$ at $c$.
A cone $\gamma$ is said to be normal if there exists $c\in v\mathcal C$ such that $\gamma(c): c\to c_\gamma$ is an isomorphism. We denote by $T\mathcal C$ the set
of all normal cones in 
$\mathcal C$ and by  $M_\gamma ,$ the set 
$$ M_\gamma =\{c\in v\mathcal C: \gamma(c)\text{ is an isomorphism } \}.$$
\begin{defn}\emph{(\cite{az}\cite{kn})}
	A category $\mathcal C$ with subobjects is called a normal category if the following holds
	\begin{enumerate}[label=\emph{(\arabic*)}]
		\item any morphism in $\mathcal C$ has a normal factorization
		\item every inclusion in $\mathcal C$ splits
		\item for each $c\in v\mathcal C$ there is a normal cone $\gamma $ with vertex $c$ and $\gamma(c) = 1_{c_\gamma}.$
	\end{enumerate}
\end{defn}
Observe that given a normal cone $\gamma$ and an epimorphism $f: c_\gamma \to d$ the map $\gamma * f : a \to \gamma(a)f$ from $v\mathcal C$ to $\mathcal C$ is a normal cone with vertex $d$. Consider two normal cones 	$\gamma$ and $\sigma$, then 
$$\gamma\cdot \sigma = \gamma*(\sigma(c_\gamma))^{\circ} $$ 
where $(\sigma(c_\gamma))^{\circ}$ is the epimorphic part of $\sigma(c_\gamma)$, defines a binary composition on $T\mathcal C$.
\begin{thm} \emph{(Theorem III.2 \cite{kss})}\label{e1}
	Let $\mathcal C$ be a normal category. Then $T\mathcal C$ the set of all normal cones in $\mathcal C$ is a regular semigroup with the binary operation 
	\begin{equation}\label{e1}
		\gamma\cdot\sigma = \gamma*(\sigma(c_\gamma))^\circ
	\end{equation} 
	and $\gamma\in T\mathcal C$ is idempotent if and only if $\gamma(c_{\gamma})=1_{c_{\gamma}}$.
\end{thm}    

\par
The two normal categories are associated with a regular semigroup $S,$ namely the principal left ideal category $\mathscr L(S)$ and the principal right ideal category $\mathscr R(S).$ The objects of $\mathscr L(S)$ are principal left ideals $Se$ generated by idempotents $e\in E(S).$	The morphisms are partial right translations $\rho_u :Se\to Sf: u \in eSf.$ Dually, the objects of the category	$\mathscr R(S)$ of principal right ideals are $eS,$ and the morphisms are partial left translations $\lambda_v : v\in fSe.$ They have
an obvious choice of subobjects, namely the preorder induced by set inclusions. 	
\begin{prop}\emph{(\cite{kn}\cite{com})}\label{p21}		
	
	Let $S$ be a regular semigroup. Then $\mathscr L(S)$ is a normal category. $\rho (e, u, f) = \rho (e', v, f')$ if and only if $e\mathscr L e', f \mathscr L f', u \in eSf, v \in e'Sf' \text{ and } v = e'u.$ Let
	$\rho = \rho(e, u, f)$ be a morphism in $L(S)$. For any $g \in  \mathscr R_u \cap  \omega(e) \text{ and } h \in  E(\mathscr L_u),
	\rho = \rho(e, g, g)\rho(g, u, h)\rho(h, h, f)$ 
	is a normal factorization of $\rho.$
\end{prop}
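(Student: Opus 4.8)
The plan is to handle the three assertions separately, reducing each to a pointwise computation with the partial right translations $\rho(e,u,f)\colon Se\to Sf,\ x\mapsto xu$. Throughout I would use the identities $eu=u=uf$ valid for $u\in eSf$, the fact that $x=xe$ for every $x\in Se$, and the standard correspondence $Se=Se'$ iff $e\,\mathscr L\,e'$ between principal left ideals generated by idempotents and $\mathscr L$-classes.

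First I would prove the equality criterion. If $\rho(e,u,f)=\rho(e',v,f')$, then equality of domains and codomains gives $Se=Se'$ and $Sf=Sf'$, hence $e\,\mathscr L\,e'$ and $f\,\mathscr L\,f'$; evaluating both sides at $e'\in Se'=Se$ and using $e'v=v$ yields $v=e'u$. Conversely, from $e\,\mathscr L\,e'$, $f\,\mathscr L\,f'$ and $v=e'u$ the domains and codomains agree, and for $x\in Se=Se'$ we have $x=xe'$, so $xv=xe'u=xu$; thus the two translations coincide.

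Next I would produce the two idempotents and verify the displayed normal factorization, which simultaneously establishes axiom (1). Given an inverse $u'$ of $u$, the element $u'e$ is again an inverse of $u$ (because $eu=u$), and $g:=uu'e$ is then an idempotent with $g\,\mathscr R\,u$ and $eg=ge=g$, i.e.\ $g\in\mathscr R_u\cap\omega(e)$; likewise $h:=u'u\in E(\mathscr L_u)$. I would then check the three factors: since $g\le e$, the map $\rho(e,g,g)$ restricts to the identity on $Sg$ and is the retraction onto the subobject $Sg\subseteq Se$; choosing the inverse $u^{*}$ of $u$ lying in $R_h\cap L_g$, one has $uu^{*}=g$ and $u^{*}u=h$, so that $\rho(h,u^{*},g)$ is a two-sided inverse of $\rho(g,u,h)$ and the middle factor is an isomorphism; and since $h\,\mathscr L\,u$ gives $Sh=Su\subseteq Sf$, the factor $\rho(h,h,f)$ is precisely the inclusion $Sh\hookrightarrow Sf$. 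Applying the composition rule $\rho(e,u,f)\rho(f,v,k)=\rho(e,uv,k)$, the composite of the three factors equals $\rho(e,guh,f)$, and the relations $gu=u$, $uh=u$ collapse $guh$ to $u$, so it equals $\rho$; this is the desired normal factorization.

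Finally, for the two remaining axioms, every inclusion $\rho(e,e,f)\colon Se\hookrightarrow Sf$ with $e\le f$ is split by $\rho(f,e,e)$, since $\rho(e,e,f)\rho(f,e,e)=\rho(e,e,e)=1_{Se}$; and for each object $Se$ the assignment $Sg\mapsto\rho(g,ge,e)$ is a normal cone with vertex $Se$ and $\gamma(Se)=1_{Se}$, the cone condition $j^{g'}_{g}\gamma(Sg')=\gamma(Sg)$ following from $gg'=g$ whenever $g\le g'$. The main obstacle I anticipate is purely bookkeeping rather than conceptual: one must exhibit the inverses $u'e$ and $u^{*}$ realizing the prescribed products $uu^{*}=g$ and $u^{*}u=h$, and confirm that $g\in\omega(e)$, i.e.\ that both $eg=g$ and $ge=g$ hold; once these are in place, every other claim reduces to the pointwise idempotent identities already assembled.
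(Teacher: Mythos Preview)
The paper does not supply a proof of this proposition: it is quoted from the literature (Nambooripad and Azeef Muhammed--Rajan) as background in the preliminaries, so there is no in-paper argument against which to compare yours. Your outline is the standard verification and is correct. Two small points of bookkeeping are worth making explicit. First, the statement asserts the factorization for \emph{any} $g\in\mathscr R_u\cap\omega(e)$ and $h\in E(\mathscr L_u)$, whereas you emphasize constructing particular such $g,h$; your checks that $\rho(e,g,g)$ is a retraction, $\rho(g,u,h)$ an isomorphism (via the inverse $u^{*}\in\mathscr R_h\cap\mathscr L_g$), and $\rho(h,h,f)$ an inclusion in fact go through verbatim for arbitrary choices, and the composite still collapses to $\rho(e,guh,f)=\rho(e,u,f)$ because $gu=u$ and $uh=u$ follow from $g\,\mathscr R\,u$ and $h\,\mathscr L\,u$ alone. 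Second, the cone $Sg\mapsto\rho(g,ge,e)$ must be checked to be well defined on objects, i.e.\ independent of the idempotent representative $g$ of $Sg$; this follows from the equality criterion you just established, since $g\,\mathscr L\,g_1$ gives $g_1g=g_1$ and hence $g_1(ge)=g_1e$.
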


\begin{prop}\emph{(\cite{kn}\cite{com})}\label{p22}
	Let $S$ be a regular semigroup, $a \in S$ and $f \in E(\mathscr L_a).$ Then for each $e \in E(S),$ let $\rho^a(Se) = \rho(e, ea, f)$. Then $\rho^a$
	is a normal cone in $ \mathscr L(S)$ with vertex $Sa$ called
	the principal cone generated by $a$. $$M\rho^a = \{Se : e \in E(\mathscr R_a)\}.$$ 
	$\rho^a$
	is an idempotent in $T\mathscr L(S)$ iff
	$a \in E(S)$. The mapping $a \mapsto \rho^a $
	is a homomorphism from $S$ to $T\mathscr L(S)$. Further if $S$ is a monoid,
	then $S$ is isomorphic to $T\mathscr L(S)$.
\end{prop}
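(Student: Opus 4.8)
The plan is to establish the five assertions in turn, leaning on Proposition~\ref{p21} for the description of morphisms and the criterion for when two morphisms $\rho(e,u,f)$ coincide, and on Theorem~\ref{e1} for the idempotent criterion in $T\mathscr L(S)$. Throughout I fix the given $f\in E(\mathscr L_a)$, so that $Sf=Sa$ and $af=a$. First I would check that $\rho^a$ is a normal cone with vertex $Sa$. Since $a=af$, we get $ea=eaf\in eSf$, so each $\rho^a(Se)=\rho(e,ea,f)$ is a genuine morphism in $\mathscr L(S)(Se,Sf)$, giving cone axiom~(1). For axiom~(2), whenever $Se\subseteq Se'$ one has $e=ee'$, the inclusion $j^{Se'}_{Se}$ is $\rho(e,e,e')$, and composing partial right translations yields $j^{Se'}_{Se}\rho^a(Se')=\rho(e,e\cdot e'a,f)=\rho(e,ea,f)=\rho^a(Se)$, using $ee'=e$. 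The same equality criterion of Proposition~\ref{p21} shows $\rho^a(Se)$ is independent of the chosen idempotent representative of $Se$, since $e\mathscr L e'$ forces $e'e=e'$ and hence $e'a=e'(ea)$; normality then follows once one object with isomorphic component is produced, which is done next.

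For $M\rho^a$ I would use the standard fact that $\rho(e,u,f)$ is an isomorphism exactly when $u\mathscr R e$ and $u\mathscr L f$. Applied to $u=ea$ this reads $ea\mathscr R e$ and $ea\mathscr L a$, i.e. $ea\in R_e\cap L_a$. For the containment $\supseteq$, if $e'\in E(\mathscr R_a)$ then $e'a=a$, so $\rho^a(Se')=\rho(e',a,f)$ with $e'\mathscr R a\mathscr L f$, an isomorphism; this also supplies the object required for normality in the previous step. For $\subseteq$, suppose $\rho^a(Se)$ is an isomorphism, so $ea\in R_e\cap L_a$; here I would invoke the Miller--Clifford location-of-idempotents theorem: since the product $e\cdot a$ lies in $R_e\cap L_a$, the $\mathscr H$-class $L_e\cap R_a$ contains an idempotent $e'$, and then $e'\in E(\mathscr R_a)$ with $Se'=Se$. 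Hence $M\rho^a=\{Se:e\in E(\mathscr R_a)\}$ as sets of principal left ideals. For the idempotent criterion I would apply Theorem~\ref{e1}: $\rho^a$ is idempotent iff $\rho^a(Sf)=1_{Sf}$, i.e. $fa=f$; combined with $af=a$ this gives $a^2=(af)a=a(fa)=af=a$, while conversely $a\in E(S)$ with $a\mathscr L f$ yields $fa=f$ at once.

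The homomorphism property $\rho^a\cdot\rho^b=\rho^{ab}$ is where the real work lies, and I expect it to be the main obstacle, since it forces one to unwind the definition $\rho^a\cdot\rho^b=\rho^a*(\rho^b(c_{\rho^a}))^\circ$ together with the epimorphic-component operation. Writing $f_a\in E(\mathscr L_a)$ and $f_b\in E(\mathscr L_b)$, the component of the composite at $Se$ is $\rho(e,ea,f_a)$ followed by the epimorphic part of $\rho(f_a,f_ab,f_b)$; as translations these compose to right multiplication by $ea\,f_a b$, and the identity $af_a=a$ collapses this to $e(ab)$. The delicate points are to confirm that the codomain produced by the epimorphic component is the object $S(ab)$ and then to match $\rho(e,e(ab),f_{ab})$ with $\rho^{ab}(Se)$ through the equality criterion of Proposition~\ref{p21}; granting this, $a\mapsto\rho^a$ is a homomorphism.

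Finally, for the monoid case I would show $a\mapsto\rho^a$ is a bijection. Injectivity is immediate by evaluating at the top object $S=S1$: $\rho^a(S)=\rho(1,a,f_a)$, so $\rho^a=\rho^b$ forces $a=b$ by Proposition~\ref{p21}. For surjectivity, note that in a monoid $Se\subseteq S1=S$ for every object, so cone axiom~(2) shows that any normal cone $\gamma$ is determined by its top component $\gamma(S)=\rho(1,a,f)$; the inclusion computation from the first step then gives $\gamma(Se)=j^{S}_{Se}\gamma(S)=\rho(e,ea,f)=\rho^a(Se)$, whence $\gamma=\rho^a$. Thus $S\cong T\mathscr L(S)$.
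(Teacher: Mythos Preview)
The paper does not prove Proposition~\ref{p22}; it is quoted verbatim from the references \cite{kn} and \cite{com} as a preliminary fact, with no argument supplied. So there is nothing in the present paper to compare your proposal against. That said, your outline is the standard one and is essentially correct: the cone axioms, the description of $M\rho^a$ via the isomorphism criterion together with Miller--Clifford, the idempotent test via Theorem~\ref{e1}, the homomorphism computation $ea\cdot f_ab=e(ab)$, and the monoid argument all go through as you indicate. The only place that deserves one extra line is the surjectivity step in the monoid case: when you write $\gamma(S)=\rho(1,a,f)$ and then invoke $\rho^a$, you are tacitly assuming $f\in E(\mathscr L_a)$, i.e.\ $Sa=Sf=c_\gamma$. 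A priori one only has $a\in Sf$; the missing inclusion $Sf\subseteq Sa$ comes from normality of $\gamma$, since an isomorphic component $\gamma(Se_0)=\rho(e_0,e_0a,f)$ forces $Se_0a=Sf$ and hence $Sf\subseteq Sa$. With that sentence added, your argument is complete.
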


\subsection{Semigroup of order-preserving transformations  on a finite chain}\label{s3}

Let $X_n=\{1\leq 2\leq\cdots\leq n: n\in \mathbb N\}$ be a finite chain. A transformation $f:X_n\to X_n$ is called order-preserving if $(i)f\leq (j)f$ whenever $i\leq j.$  The semigroup of all singular order-preserving mappings from $X_n$ to itself under function composition is denoted by $OX_n$. 
In order to avoid trivialities, assume $n\geq 3.$   The Green's relations in the semigroup $OX_n$ are charcterized entirely in terms of their images and kernels. It is known that $OX_n$ is a regular subsemigroup of $TX_n,$ the full transformation semigroup of $X_n.$ The following lemma characterize all the Green's equivalences in $OX_n.$
\begin{lem}\label{le1}\cite{gracinda}
	Let $f$ and $g$ be elements of the semigroup $OX_n$ of order-preserving transformations  on a finite chain $X_n.$ Then we have the following.
	\begin{enumerate}[label=\emph{(\arabic*)}]
		\item $f\;\mathscr R\; g$ if and only if $ ker\; g =ker  \;f, $
		\item $f\; \mathscr L \;g $ if and only if $ Im \; f = Im  \;g, $
		\item $f\; \mathscr H \;g $ if and only if $ f = g, $
		\item $f \mathcal J g $ if and only if $ |Im \; f| = |Im  \;g|. $
		
	\end{enumerate}
\end{lem}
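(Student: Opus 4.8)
The plan is to exploit the special structure of order-preserving maps: because $f\in OX_n$ preserves order, each class of $\ker f$ is an interval of $X_n$, and if $\operatorname{Im} f=\{y_1<y_2<\cdots<y_k\}$ then $f$ sends the $i$-th interval (in the natural order on the kernel classes) onto $y_i$. Consequently an element of $OX_n$ is completely determined by the pair $(\ker f,\operatorname{Im} f)$ once the number of kernel classes equals $|\operatorname{Im} f|$. This structural observation is the engine behind all four statements, and I would record it first.

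For (2) I would argue as in a general semigroup that $f\,\mathscr L\,g$ means $f=ag$ and $g=bf$ for some $a,b\in OX_n^1$; since composition is read left to right, $ag$ factors through $g$ and hence $\operatorname{Im} f=\operatorname{Im}(ag)\subseteq\operatorname{Im} g$, and symmetrically $\operatorname{Im} g\subseteq\operatorname{Im} f$, forcing $\operatorname{Im} f=\operatorname{Im} g$. For the converse, writing $\operatorname{Im} f=\operatorname{Im} g=\{y_1<\cdots<y_k\}$, I choose for each $i$ a preimage $z_i$ of $y_i$ under $g$ with $z_1<\cdots<z_k$ and define $a$ by sending every point of the $i$-th kernel class of $f$ to $z_i$; then $ag=f$, and $a$ is order-preserving precisely because both $f$ and $g$ respect the order. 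The symmetric construction yields $b$ with $bf=g$, and both multipliers are singular since $k<n$.

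For (1) I would run the dual argument: $f\,\mathscr R\,g$ forces $\ker f=\ker g$ because right multiplication can only coarsen the kernel, and conversely, when $\ker f=\ker g$ the assignment on $\operatorname{Im} g$ determined by $(C)g\mapsto (C)f$ for each common kernel class $C$ is order-preserving by the interval structure of the classes, and extends to an order-preserving singular transformation $s$ with $gs=f$. Statement (3) is then immediate: $\mathscr H=\mathscr L\cap\mathscr R$ together with the uniqueness observation shows that equality of image and kernel forces $f=g$, the converse being trivial; this is the point at which order-preservation collapses each $\mathscr H$-class to a singleton. Finally for (4), the forward direction uses that any two-sided product $agb$ has image of size at most $|\operatorname{Im} g|$, so $|\operatorname{Im} f|=|\operatorname{Im} g|$; for the converse I would invoke $\mathcal J=\mathcal D$ in the finite semigroup $OX_n$ and build a bridge element $h\in OX_n$ with $\ker h=\ker g$ and $\operatorname{Im} h=\operatorname{Im} f$ (possible exactly because $g$ has $k$ kernel classes and $f$ has an image of size $k$), so that $f\,\mathscr L\,h\,\mathscr R\,g$.

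The main obstacle I anticipate is not the forward directions, which are formal, but verifying that the multipliers produced in the converse directions actually lie in $OX_n^1$ rather than merely in the full transformation monoid. The order-preserving constraint genuinely restricts which multipliers are available, and the whole argument hinges on the fact that choosing preimages monotonically (for $\mathscr L$) and using the interval structure of the kernel classes (for $\mathscr R$) keeps every constructed map order-preserving. This is where the hypotheses on $f$ and $g$ are essential and must be used carefully.
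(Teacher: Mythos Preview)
The paper does not supply its own proof of this lemma: the statement is quoted from Gomes--Howie \cite{gracinda} and stated without argument, so there is nothing in the paper to compare your proposal against line by line.

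That said, your sketch is a correct and complete proof of the quoted result. The structural observation you isolate---that kernel classes of an order-preserving map are intervals and that such a map is uniquely determined by the pair $(\ker f,\operatorname{Im} f)$---is exactly the right engine, and it is what makes part (3) work (and is implicitly used in the paper's later Remark that $\mathscr H$ is the identity, hence $OX_n$ is fundamental). Your constructions of the multipliers in the converse directions of (1) and (2) are correct; the monotone choice of preimages and the interval structure of the kernel classes are precisely what keep the constructed maps inside $OX_n$, and your observation that $k<n$ ensures they are singular. The $\mathcal J$ argument via $\mathcal J=\mathcal D$ in a finite semigroup together with the bridge element $h$ is the standard route and is sound. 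The only point to tighten in a full write-up is the extension step in (1): having defined $s$ on $\operatorname{Im} g$ by $((C)g)s=(C)f$, you should say explicitly how $s$ is extended to all of $X_n$ while remaining order-preserving (e.g.\ by the usual nearest-value interpolation), but this is routine.
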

\begin{rmk}
	Since the Green's $\mathscr H$ relation in $OX_n$is identity, the semigoup $OX_n$ is a fundamental regular semigroup which is a subsemigroup of full transformation semigroup of $X_n.$ 
	
\end{rmk}

	\section{ The category of Principal left ideals of $OX_n$}\label{s4}
	In this section we characterize the normal category $\mathscr L(OX_n)$ associated with principal left ideals of $OX_n$. Here we use $S$ and $OX_n$ mutually to denote the semigroup of non-invertible transformations on $X_n.$ For any proper nontrivial subset $A$ of $X_n, $  let $e_A$ denote the idempotent transformation with image $A.$ Note that $e_A$ is not uniquely determined by $A.$

	\begin{lem}\label{l1}
		Let $A,B \subsetneq X_n$ and $\rho(e_A, u, e_B)$ be a morphism from $Se_A$ to $Se_B$. Then for any $x\in A,\; xu \in B.$ Also $\rho(e_A,u,e_B) =\rho(e'_A,v,e'_B)$ if and only if $xu=xv$ for all $x\in A,$ where $e_A,\;e'_A$ are idempotents with image $A$ and $e_B,\;e'_B$ are idempotents with image $B.$
	\end{lem}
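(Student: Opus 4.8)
The plan is to deduce both assertions from the abstract description of morphisms in $\mathscr L(S)$ supplied by Proposition \ref{p21}, combined with the concrete form of $\mathscr L$ in $OX_n$ given by Lemma \ref{le1}. Two elementary facts carry the whole argument, and I would isolate them first. Since $\rho(e_A,u,e_B)$ is a morphism from $Se_A$ to $Se_B$, its underlying element satisfies $u\in e_ASe_B$, so idempotency of $e_A,e_B$ gives the absorption relations $e_Au=u=ue_B$. Second, an idempotent transformation $e_A$ with image $A$ restricts to the identity on $A$, i.e. $xe_A=x$ for every $x\in A$, this being the standard fact that an idempotent fixes its image pointwise.

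For the first assertion I would use only the right-absorption $u=ue_B$. For any $x\in A$ (in fact any $x\in X_n$) we then have $xu=x(ue_B)=(xu)e_B$, and since $e_B$ has image $B$ the right-hand side lies in $B$; hence $xu\in B$. This records that $u$ restricts to a well-defined map $A\to B$, which is exactly the data of the morphism.

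For the second assertion I would invoke Proposition \ref{p21}, by which $\rho(e_A,u,e_B)=\rho(e'_A,v,e'_B)$ holds precisely when $e_A\,\mathscr L\,e'_A$, $e_B\,\mathscr L\,e'_B$, $u\in e_ASe_B$, $v\in e'_ASe'_B$ and $v=e'_Au$. Here the two $\mathscr L$-conditions are automatic: by hypothesis $e_A,e'_A$ both have image $A$ and $e_B,e'_B$ both have image $B$, so Lemma \ref{le1}(2) yields $e_A\,\mathscr L\,e'_A$ and $e_B\,\mathscr L\,e'_B$. Thus the equality of morphisms reduces to the single semigroup equation $v=e'_Au$, and it remains to prove $v=e'_Au\iff xu=xv\text{ for all }x\in A$. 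The forward direction is immediate from the pointwise-fixing fact: for $x\in A$, $xv=x(e'_Au)=(xe'_A)u=xu$.

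The step I expect to be the main obstacle is the backward direction, where agreement of $u$ and $v$ merely on the subset $A$ must be promoted to an equality of transformations on all of $X_n$. The key observation is that $e'_A$ maps every point of $X_n$ into $A$, so the global behaviour of $e'_Au$ is controlled by its behaviour on $A$, used together with the left-absorption $e'_Av=v$ coming from $v\in e'_ASe'_B$. Concretely, for an arbitrary $y\in X_n$ set $x=ye'_A\in A$; then $y(e'_Au)=(ye'_A)u=xu$ while $yv=y(e'_Av)=(ye'_A)v=xv$, and the hypothesis $xu=xv$ (legitimate since $x\in A$) gives $yv=y(e'_Au)$ for every $y$, i.e. $v=e'_Au$. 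Assembling the forward and backward implications with the reduction through Proposition \ref{p21} and Lemma \ref{le1} then completes the proof.
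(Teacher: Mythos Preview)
Your proof is correct and follows essentially the same route as the paper's: both use $u\in e_ASe_B$ to get the image containment, and both reduce the equality of morphisms via Proposition~\ref{p21} to a single semigroup equation (you use $v=e'_Au$, the paper uses the symmetric form $u=e_Av$), then exploit that an idempotent fixes its image pointwise and maps $X_n$ into $A$. Your write-up is a bit more explicit about invoking Lemma~\ref{le1}(2) for the $\mathscr L$-conditions and about the promotion step in the backward direction, but the substance is identical.
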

	\begin{proof}
		
		By the definition of a morphism in $\mathcal L(OX_n)$, $u\in e_ASe_B$ and $Xu\subseteq Xe_B= B.$ In particular  $xu \in B$ for all $x\in A.$ To prove the second assertion, let   $\rho(e_A,u,e_B) =\rho(e'_A,v,e'_B),$ then by Proposition \ref{p21} $ u=e_Av.$ Also since $e_A$ is an idempotent map with image $A$ it can be seen that $e_A\vert_A = 1_A$. Hence $xu = xv$ for all $x\in A.$ Conversely if $xu = xv$ for all $x\in A,$ then since $u\in e_ASe_B,$ $e_Au = u$ and by our assumption $e_Au= e_Av.$ Hence $u=e_Av$ and using Proposition \ref{p21} we have $\rho(e_A,u,e_B) =\rho(e'_A,v,e'_B).$
	\end{proof}
	\begin{prop}\label{p1}
		All normal cones in the category $\mathscr L(OX_n)$ are principal cones.
		
	\end{prop}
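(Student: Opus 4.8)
The plan is to show that an arbitrary normal cone $\gamma$ in $\mathscr L(OX_n)$ is recovered as the principal cone $\rho^a$ for a transformation $a$ assembled from the components of $\gamma$ itself. Write the vertex as $c_\gamma = Sg$ with $g$ an idempotent of image $C = \operatorname{Im}(g)$; since $g\in OX_n$ is singular, $C\subsetneq X_n$. For each proper nontrivial subset $A$ of $X_n$, the component $\gamma(Se_A)$ is a morphism $Se_A\to Sg$, so by Lemma \ref{l1} it is completely determined by a map $\gamma_A\colon A\to C$, namely $x\mapsto xu$ where $\gamma(Se_A)=\rho(e_A,u,g)$; this $\gamma_A$ is order-preserving with values in $C$ because $u\in OX_n$ and $Xu\subseteq Xg=C$.

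First I would assemble the $\gamma_A$ into a single transformation. Cone condition (2) of Definition \ref{d1}, applied to the inclusion $\{x\}\subseteq A$, forces $(x)\gamma_A=(x)\gamma_{\{x\}}$ for every $x\in A$; hence the value $xa:=(x)\gamma_{\{x\}}$ is independent of the subset containing $x$ and defines a transformation $a$ of $X_n$ with $xa=(x)\gamma_A$ for all $A\ni x$. To see that $a$ is order-preserving, take $x<y$ and use order-preservation of $\gamma_{\{x,y\}}$ on the two-element subset $\{x,y\}$, which is proper and nontrivial since $n\geq 3$: $xa=(x)\gamma_{\{x,y\}}\leq (y)\gamma_{\{x,y\}}=ya$. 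As each $\gamma_A$ maps into $C\subsetneq X_n$, we get $\operatorname{Im}(a)\subseteq C\subsetneq X_n$, so $a$ is singular and $a\in OX_n$.

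Next I would pin down the vertex using normality. Since $\gamma$ is normal, there is $A_0\in M_\gamma$ with $\gamma(Se_{A_0})$ an isomorphism, which under Lemma \ref{l1} means $\gamma_{A_0}\colon A_0\to C$ is a bijection. Then $C=(A_0)\gamma_{A_0}\subseteq \operatorname{Im}(a)$, and combined with the previous inclusion this yields $\operatorname{Im}(a)=C$. By Lemma \ref{le1}(2), $a\mathscr L g$, so $Sa=Sg=c_\gamma$; thus the principal cone $\rho^a$, which exists and is normal by Proposition \ref{p22}, has the same vertex as $\gamma$.

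Finally I would check equality of the two cones componentwise. For each object $Se_A$, Proposition \ref{p22} gives $\rho^a(Se_A)=\rho(e_A,e_A a,f)$ with $f\in E(\mathscr L_a)$ and target $Sf=Sa=c_\gamma$; its action on $x\in A$ is $x(e_A a)=(xe_A)a=xa=(x)\gamma_A$, exactly the action of $\gamma(Se_A)$. Both morphisms have domain $Se_A$ and codomain $c_\gamma$, so by the equality criterion of Lemma \ref{l1} they coincide, and since this holds for every object, $\gamma=\rho^a$. I expect the main obstacle to be the vertex-matching step: a priori $\operatorname{Im}(a)$ could be a proper subset of $C$, and it is precisely normality, through the existence of an isomorphism component, that forces $\operatorname{Im}(a)=C$ and hence $Sa=c_\gamma$; the gluing of the components, while routine, also relies on singletons and two-element sets being genuine objects, which is where the standing hypothesis $n\geq 3$ enters.
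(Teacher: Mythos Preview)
Your proof is correct and follows essentially the same route as the paper: define the transformation from the singleton components, use two-element subsets to verify order-preservation, invoke normality to get surjectivity onto the vertex image, and check $\gamma=\rho^a$ componentwise via Lemma~\ref{l1}. The only cosmetic difference is that you argue order-preservation directly while the paper phrases it as a contradiction, and you package the gluing step more explicitly upfront.
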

	\begin{proof}
		Let $\gamma$ be a normal cone in $\mathscr L(OX_n)$, with $c_\gamma =Se_A$ for some $e_A \in E(OX_n).$ For any $x\in X_n,\; e_x$ denotes the constant map whose image is $x$ and $Se_x=\{x\}$. Consider $\gamma(Se_x) $ for $x\in X.$ Let $\gamma(Se_x) =\rho(e_x,u_x,e_A)$. By Lemma \ref{l1} $xu_x\in A.$ Since $\gamma(Se_x)$ is uniquely determined by $x$, $u_x$ is uniquely determined by $x.$ Define $\alpha$ on $X_n$ as follows. $$ x\alpha=xu_x\;\;\text{ for all } x\in X_n \text{ and }u_x \text{ as above }.$$ Since $u_x$ is uniquely determined by $x,\;\alpha $ is well defined. Since $xu_x\in A$ for all $x\in X_n,$ $\alpha$ is a function from $X_n$ with image contained in $A.$ Now we prove that $\alpha$ is an order-preserving transformation.  For, if possible assume that $\alpha$ is not an order-preserving function. Then there exists $x,y\in X_n$ such that $x\alpha\leq y\alpha$ for $x\geq y.$ Now consider the set $Y=\{x,y\}$ such that $Sx,Sy\subseteq Se_Y$ and $\gamma(Se_Y)=\rho(e_Y, u, e_A)$. Since $Sx,Sy\subseteq Se_Y$ we have
		$$\gamma(Se_x)=j^{Se_Y}_{Se_x}\gamma(Se_Y)\text{ and } \gamma(Sy)=j^{Se_Y}_{Sy}\gamma(Se_Y).$$ That is $$\rho(e_x,u_x,e_A) =\rho(e_x, e_x,e_Y)\rho(e_Y, u,e_A)=\rho(e_x, e_xu,e_A).$$  Similarly we get $\rho(e_y,u_y,e_A) =\rho(e_y,e_yu,e_A).$ From these two equations we get $x\alpha =xu_x = xe_xu =xu$ and $y\alpha =yu.$ Hence $xu\leq yu $ for $x\geq y,$ which is a contradiction to the fact that $u$ is order-preserving. Therefore $\alpha \in OX_n$ is an order-preserving transformation with image $\alpha$ contained in $A.$ Since $\gamma$ is a normal cone there is a component $\gamma(Se_C)$ is an isomorphism and let $\gamma(Se_C) =\rho(e_C, \beta,e_A)$. Then by Lemma \ref{l1} $x\beta \in A$ for all $x\in C.$ Since $\gamma(Se_C)$ is an isomorphism $\beta\;\mathscr L\; e_A,$ and $Im\; \beta = A.$ Now we show that $Im\; \alpha = A.$ Let $y
		\in A,$ then there exists $x\in C$ such that $x\beta =y.$
		$$ \rho(e_x, u_x, e_A) =\gamma(Sx) =j^{Se_C}_{Sx}\gamma(Se_C) = \rho(e_x,e_x\beta, e_A ).$$ Thus $u_x= e_x\beta$(using Proposition \ref{p21}), so that $x\alpha = xu_x = xe_x\beta= x\beta=y.$ Hence $\alpha$ is onto. Now we prove that $\gamma =\rho^\alpha.$ Since $Im\;\alpha= Se_A$ the vertex of $\rho^\alpha$ is $Se_A=c_\gamma.$ For $B\subseteq X,$ we prove that, if $\gamma(Se_B)=\rho(e_B,v,e_A)$ , then $\rho(e_B,v,e_A) = \rho(e_B,e_B\alpha,e_A)$. For that it is sufficient to prove that $xv=xe_B\alpha$ for all $x\in B.$ If $x\in B,$ then $Sx\subseteq Se_B$ and by the definition of cones $$ \gamma(Sx) = j^{Se_B}_{Sx}\gamma(Se_B) = \rho(e_x,e_x,e_B)\rho(e_B,v,e_A )=\rho(e_x,e_xv,e_A).$$ But $\gamma(Sx) =\rho(e_x,u_x,e_A),$ equating these we get $xu_x =xe_xv =xv.$ That is for all $x\in B$ we have $ x\alpha = xv.$ Therefore $\rho(e_B,v,e_A) =\rho(e_B,e_B\alpha,e_A).$ Hence $\gamma =\rho^\alpha$ and all normal cones are of the form $\rho^\alpha$ for some $\alpha\in OX_n.$
	\end{proof}
	\begin{thm}\label{t1}
		The semigroup of normal cones in $\mathscr L(OX_n)$ is isomorphic to $O(X_n).$
	\end{thm}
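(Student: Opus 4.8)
The plan is to exhibit an explicit isomorphism built from the machinery already assembled. By Proposition \ref{p22} the assignment $\Phi\colon OX_n\to T\mathscr L(OX_n)$, $a\mapsto\rho^a$, is a semigroup homomorphism, so it suffices to prove that $\Phi$ is a bijection. I stress at the outset that the monoid clause of Proposition \ref{p22} cannot be invoked directly: here $OX_n$ consists of the singular (non-invertible) order-preserving maps and so is not a monoid. This is precisely the point at which a separate argument is required, and it is the heart of the proof.

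For surjectivity I would simply appeal to Proposition \ref{p1}: every normal cone in $\mathscr L(OX_n)$ is a principal cone, that is, equals $\rho^\alpha$ for some $\alpha\in OX_n$. Hence $\Phi$ is onto.

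The main work is injectivity, and the idea is to recover a transformation \emph{pointwise} from its principal cone by testing it against the one-point subobjects. Recall that for each $x\in X_n$ the constant map $e_x$ satisfies $Se_x=\{x\}$ and $xe_x=x$. Suppose $\rho^a=\rho^b$ for $a,b\in OX_n$. Evaluating both cones at the object $Se_x$, Proposition \ref{p22} gives $\rho^a(Se_x)=\rho(e_x,e_xa,f)$ and $\rho^b(Se_x)=\rho(e_x,e_xb,f')$, where $f\in E(\mathscr L_a)$ and $f'\in E(\mathscr L_b)$. Since these two morphisms coincide, Lemma \ref{l1} forces $x(e_xa)=x(e_xb)$; using $xe_x=x$ this reads $xa=xb$. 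As $x\in X_n$ was arbitrary, $a=b$, so $\Phi$ is injective.

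Combining the three observations, $\Phi$ is a bijective homomorphism and therefore an isomorphism $OX_n\cong T\mathscr L(OX_n)$. I expect the only delicate point to be the injectivity step, and within it the need to compare cone components through Lemma \ref{l1} rather than through literal equality of the second coordinates: the representing idempotents $f,f'$ for $\mathscr L_a$ and $\mathscr L_b$ need not coincide, so it is essential that the equality of the morphisms $\rho(e_x,e_xa,f)$ and $\rho(e_x,e_xb,f')$ be read off from their common action on the singleton $\{x\}$. Everything else is bookkeeping layered on top of Propositions \ref{p22} and \ref{p1}.
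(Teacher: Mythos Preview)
Your proof is correct and follows essentially the same route as the paper: define $\Phi(a)=\rho^a$, invoke Proposition~\ref{p22} for the homomorphism, Proposition~\ref{p1} for surjectivity, and for injectivity evaluate $\rho^a=\rho^b$ at the singleton objects $Se_x$ and apply Lemma~\ref{l1} to conclude $xa=xb$ for every $x$. Your remark that Lemma~\ref{l1} (rather than literal equality of triples) is what makes the comparison work despite possibly different idempotents $f,f'$ is exactly the point.
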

	\begin{proof}
		It is known that, the map $ \phi:T\mathscr L(OX_n)\to \mathscr L(OX_n)$ defined by $\phi(\alpha) =\rho^\alpha$ is a semigroup homomorphism by Proposition \ref{p22}. Using Proposition \ref{p1}, the map $\phi$ is onto. Now we need to show that $\phi$ is injective. For, let $\alpha,\beta \in OX_n$ such that $\rho^\alpha =\rho^\beta.$ For any $x\in X_n,$ $\rho^\alpha (Sx)=\rho(e_x, e_x\alpha, e_A)$ where $e_A \;\mathscr L\; \alpha \text{ and } \rho^\beta (Sx)=\rho(e_x, e_x\beta, e_B),\; e_B\; \mathscr L\; \beta .$ Since $\rho^\alpha =\rho^\beta, $ we have $$\rho(e_x, e_x\alpha, e_B) =\rho(e_x, e_x\alpha, e_B).$$ By Lemma \ref{l1}, $e_x\alpha =e_x\beta.$ It follows that $ x\alpha =x\beta \text{ for all } x\in X_n \text{ and }\alpha=\beta.$ \end{proof}
	
	\subsection{Power set category}
	Let $X_n=\{1\leq 2\leq \cdots \leq n\}$ be a non empty finite chain and in order to avoid trivialities, assume that $n\geq 3.$ Given any finite chain, one can construct a very simple category $P_o(X_n)$ from $X_n$ with the object set as the set of all proper subsets of $X_n.$ Given any two subsets $A,B;$ a morphism from $A$
	to $B$ is an order-preserving transformation  from $A$ to $B.$ We will call $P_0(X_n)$ the power set category and it has a natural choice of subobjects; the one provided by set inclusions. That is we have an inclusion function $j=j_A^B: A\to B$ if $A\subseteq B.$ 
	In the following proposition we prove that $P_o(X_n)$ is a normal category.
	\begin{prop}
		The power set category $P_o(X_n)$ is a normal category.
	\end{prop}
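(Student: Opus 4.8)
The plan is to verify the three defining conditions of a normal category directly, producing all the required retractions and cones from the order structure of $X_n$. Throughout I take the objects to be the nonempty proper subsets of $X_n$ (this is forced: if the empty set were an object there could be no retraction onto it from a nonempty set, so condition (2) would fail, and it matches the fact that in $\mathscr L(OX_n)$ every object $Se_A$ has nonempty $A$). With this convention the hom-sets are nonempty and, since $X_n$ is finite, an isomorphism of $P_o(X_n)$ is precisely an order-isomorphism between two subsets of equal cardinality. The single construction driving all three verifications is the following order-preserving retraction: for a nonempty $A=\{a_1<a_2<\cdots<a_k\}\subseteq X_n$ define $r_A:X_n\to A$ by sending $x$ to the largest $a_i$ with $a_i\leq x$, and to $a_1$ when $x<a_1$. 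Then $r_A$ is order-preserving and fixes $A$ pointwise.

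First I would check that every inclusion splits (condition (2)). For $A\subseteq B$ the restriction $q=r_A|_B:B\to A$ is order-preserving and fixes $A$, so in the left-to-right convention $j^B_A\,q=1_A$; hence $q$ is the required retraction.

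Next I would exhibit a normal factorization of an arbitrary morphism $f:A\to B$ (condition (1)). Let $C=Af\subseteq B$ be its image, say $C=\{c_1<\cdots<c_m\}$, which is a nonempty proper subset and so a valid object. Since $f$ is order-preserving and surjective onto $C$, each fibre $F_i=\{x\in A: xf=c_i\}$ is convex: if $x<y<z$ with $xf=zf=c_i$ then $c_i\leq yf\leq c_i$ forces $yf=c_i$. Thus the $F_i$ are consecutive blocks of $A$ in increasing order. Picking a representative $a_i\in F_i$ and setting $A'=\{a_1<\cdots<a_m\}\subseteq A$, define $q:A\to A'$ by collapsing each block $F_i$ onto $a_i$; monotonicity of the blocks makes $q$ an order-preserving retraction onto $A'$. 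With the order-isomorphism $u:A'\to C,\ a_i\mapsto c_i$, and the inclusion $j=j^B_C$, one reassembles $x(quj)=(a_i)uj=c_i=xf$ for $x\in F_i$, so $f=quj$ is a normal factorization whose epimorphic component $f^{\circ}=qu$ is the corestriction of $f$ onto its image.

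Finally, for condition (3) I would fix an object $A$ and set $\gamma(D)=r_A|_D$ for every object $D$. Each $\gamma(D)$ is an order-preserving map $D\to A$, hence a morphism; the compatibility $j^{D'}_D\,\gamma(D')=\gamma(D)$ for $D\subseteq D'$ holds because both sides equal the restriction of $r_A$ to $D$; and $\gamma(A)=r_A|_A=1_A$, so $A\in M_\gamma$ and $\gamma$ is automatically a normal cone with vertex $A$ and $\gamma(A)=1_A$. I expect the only genuinely delicate point to be condition (1): one must argue that the fibres of an order-preserving surjection are consecutive intervals so that the collapsing map $q$ is order-preserving, and check that the transversal $A'$ and the isomorphism $u$ recombine to $f$; the splitting of inclusions and the existence of the cone are then immediate corollaries of the same retraction $r_A$.
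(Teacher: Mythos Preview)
Your proof is correct and follows essentially the same approach as the paper: both use the floor-type retraction $r_A$ (the paper calls it $e$) to split inclusions and to build the idempotent normal cone by restriction, and both obtain the normal factorization by taking $B'=\operatorname{Im}f$ and $A'$ a cross-section of the kernel fibres with $u=f|_{A'}$. You supply more detail than the paper---the convexity-of-fibres argument and the explicit exclusion of $\emptyset$---but the underlying constructions are identical.
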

	\begin{proof}
		It is easy to see that $P_o(X_n)$ is a category with subobjects and the subobject relation is induced by the usual subset relation. Given an inclusion $j_{A'}^A\text{ where } A'\subseteq A,$ define a retraction $e: A\to A'$ as follows:\\
		Let $A'=\{x_1\leq x_2\leq\cdots\leq x_k\}$ and $x_i\in X_n, \; i=1,2,\cdots,k.$ Define 
		\begin{equation}
			(x)e = \begin{cases}
				x & \text{ if } x\in A'\\
				x_i & \text{ if } x\notin A' \text{ and } x_i\leq x\leq x_{i+1}\; i,\in\{1,2,\cdots k-1\}.\\
				x_1 & \text{ if } x\leq x_1\\
				x_k & \text{ if } x\geq x_k
			\end{cases}		
		\end{equation}
		Then clearly, $e\in OX_n \text{ and }je =1_{A'}.$
		Given any morphism( order-preserving transformation) $f: A\to B;$ let $B'= Im\;f$ and $A'$ is the cross-section of the partition of $A$ determined by $ker\;f.$ Then $f$ has a normal factorization and $f=euj,$ where $u =f\vert_{A'}$ is a bijection and $j=j_{B'}^B.$ Given any $A\subseteq X_n$, let $\gamma$ be a cone in $P_o(X_n)$ with vertex $A$ is defined as follows. Let $ u: X_n\to A$ be an order-preserving transformation such that $u(a)= a\text{ for all }a\in A.$ For any $B\subseteq X_n,$ define $\gamma(B)= u\vert_B: B\to A.$ Then $\gamma$ is a normal cone with $\gamma(A) =1_A.$ Thus $P_o(X_n)$ is a normal category.
	\end{proof}
	In the following theorem we prove that the categories $P_o(X_n)$ and $\mathscr L(X_n)$ are isomorphic. For that, we show that there exists an inclusion preserving functor from $\mathscr L(OX_n)$ to $P_o(X_n)$ which is an order isomorphism, $v$-injective, $v$-surjective and fully-faithful.
	\begin{thm}
		
		The categories $P_o(X_n)$ and $\mathscr L(OX_n)$ are isomorphic.
	\end{thm}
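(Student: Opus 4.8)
The plan is to construct an explicit functor $F : \mathscr{L}(OX_n) \to P_o(X_n)$ and verify it is an isomorphism of categories with subobjects. On objects, I would send the principal left ideal $Se_A$ to the subset $A = \operatorname{Im} e_A \subsetneq X_n$. This is well-defined because, by Lemma \ref{le1}(2), $Se_A = Se_{A'}$ precisely when $\operatorname{Im} e_A = \operatorname{Im} e_{A'}$, so the image $A$ is an invariant of the left ideal independent of which idempotent generator we pick. On morphisms, I would send $\rho(e_A, u, e_B) \in \mathscr{L}(OX_n)(Se_A, Se_B)$ to the map $A \to B$ given by $x \mapsto xu$. Lemma \ref{l1} does the heavy lifting here: its first assertion guarantees $xu \in B$ for all $x \in A$, so the assignment lands in $B$, and its second assertion states that $\rho(e_A, u, e_B) = \rho(e_A', v, e_B')$ iff $xu = xv$ for all $x \in A$, which is exactly the statement that $F$ is well-defined on morphisms \emph{and} faithful in one stroke. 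Since $u$ is order-preserving on $X_n$, its restriction to $A$ is an order-preserving map $A \to B$, hence a legitimate morphism of $P_o(X_n)$.

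Next I would check the functorial and structural axioms. Functoriality (preservation of identities and composition) follows from the fact that composition of the partial right translations $\rho_u$ corresponds to composition of the underlying maps, so $F$ commutes with composition on the level of the $x \mapsto xu$ assignment. For subobjects: the inclusion $j^{Se_B}_{Se_A}$ in $\mathscr{L}(OX_n)$ exists exactly when $Se_A \subseteq Se_B$, i.e.\ when $A \subseteq B$, and $F$ sends it to the inclusion $j^B_A$ of $P_o(X_n)$; thus $F$ is inclusion-preserving and restricts to an order isomorphism on the posets of objects. Object-bijectivity ($v$-injectivity and $v$-surjectivity) is immediate: every proper subset $A \subsetneq X_n$ is the image of some idempotent $e_A$, giving an object $Se_A$ mapping onto $A$, and distinct subsets give distinct left ideals by Lemma \ref{le1}(2).

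The remaining point is fullness. Given any order-preserving map $g : A \to B$ in $P_o(X_n)$, I would extend $g$ to an element $u \in OX_n$ whose restriction to $A$ agrees with $g$ and which factors appropriately through the idempotents, so that $\rho(e_A, e_A u, e_B)$ is a well-defined morphism of $\mathscr{L}(OX_n)$ mapped by $F$ onto $g$; here the extension can be built much as the retraction $e$ was constructed in the preceding proposition, by collapsing points outside $A$ onto nearest elements of $A$ in an order-preserving way. Combined with faithfulness from Lemma \ref{l1}, this makes $F$ fully faithful, and together with $v$-bijectivity and inclusion-preservation it yields an isomorphism of categories with subobjects.

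The main obstacle I anticipate is fullness, specifically the need to realize an abstract order-preserving map $g : A \to B$ as arising from a global order-preserving transformation $u$ of the whole chain $X_n$ that is compatible with the chosen idempotent generators. The subtlety is that $e_A$ is not uniquely determined by $A$ (as the paper explicitly warns), so one must verify the construction is insensitive to that choice --- but Lemma \ref{l1} reassures us that only the values $xu$ for $x \in A$ matter, which should neutralize this ambiguity. The order-preserving extension step mirrors the retraction formula already established, so I expect it to go through by a careful but routine case analysis on where points of $X_n \setminus A$ fall relative to the elements of $A$.
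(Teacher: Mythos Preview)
Your proposal is correct and follows essentially the same route as the paper: the same functor $F(Se_A)=A$, $F(\rho(e_A,u,e_B))=u|_A$, with well-definedness and faithfulness drawn from Lemma~\ref{l1}, $v$-bijectivity from Lemma~\ref{le1}(2), and inclusion-preservation from the set-theoretic containment of images. For fullness the paper simply takes $u=e_A g$ (precompose $g$ with any idempotent retraction onto $A$), which is exactly the ``extend $g$ by collapsing $X_n\setminus A$ onto $A$'' construction you describe; so the anticipated obstacle dissolves immediately.
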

	\begin{proof}
		Define a functor $F:\mathscr L(OX_n)\to P_o(X_n) $ as follows: For $Se_A\in v\mathscr L(OX_n)$ and a morphism $\rho(e_A,u,e_B)\in \mathscr L(OX_n)$ we have $$ vF(Se_A) =A\quad and \quad F(\rho(e_A,u,e_B))= u\vert_A.$$ Clearly $F$ is well defined by Lemma \ref{le1} and Lemma \ref{l1}. Now let $\rho(e_A,u,e_B),\rho(e_B,v,e_C)$ be two composable morphisms in $\mathscr L(OX_n)$. Then $$ \rho(e_A,u,e_B)\rho(e_B,v,e_C)=\rho(e_A,uv,e_C). $$ Now $F(\rho(e_A,uv,e_C))= uv\vert_A= u\vert_Av\vert_B= F(\rho(e_A,u,e_B))F(\rho(e_B,v,e_C)).$ Hence $F$ is a functor.  Using the Lemma \ref{le1} it is easy to prove that $F$ is inclusion preserving and $vF$ is an order isomorphism.\par
		Now we prove that $vF$ is a bijection. For, Let $A\subseteq X_n$ such that $A=\{x_1\leq x_2\leq\cdots\leq x_k\}.$ Then define \begin{equation}
			e(x) = \begin{cases}
				x_1 & \text{ if } x\leq x_1\\
				x_i & \text{ if }  x_{i-1}< x\leq x_{i}\; i\in\{2,3,\cdots k\}.\\
				x_k & \text{ if } x\geq x_k
			\end{cases}		
		\end{equation}
		Clearly $e$ is an idempotent order-preserving transformations  with $Im\; e=A.$ Now $F(Se) = Im\; e= A.$ Hence $vF$ is $v$-surjective. By Lemma \ref{le1} it follows that $vF$ is injective. To complete the proof only need to prove $F$ is fully-faithful. Now let $f$ be an order-preserving transformation from $A$ to $B.$ Then $e_Af$ is an order-preserving transformation with image contained in $B$ and $ e_Af\vert_{A} =f.$    So $e_Af\in e_ASe_B$ and $\rho(e_A,e_Af,e_B): Se_A\to Se_B$ such that $F(\rho(e_A,e_Af,e_B))= f.$ Hence $F$ is full. The proof of $F$ is faithfull follows from Lemma \ref{l1}. Hence the Theorem.
	\end{proof}
	Since the category $P_o(X_n)$ is isomorphic to $\mathscr L(OX_n)$, the corresponding semigroups of normal cones $T\mathscr L(OX_n)$ and $TP_o(X_n)$ are isomorphic. But using Theorem \ref{t1} we get $TP_o(X_n)$ is isomorphic to $OX_n.$ Summarising, we have the following theorem.
	\begin{thm}
		$TP_o(X_n)$ is isomorphic to the semigroup $OX_n$ of non-invertible order-preserving transformation on a finite chain $X_n.$
	\end{thm}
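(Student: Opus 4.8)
The plan is to obtain the isomorphism by transitivity, combining the category isomorphism established in the preceding theorem with Theorem \ref{t1}. The preceding theorem furnishes a normal category isomorphism $F:\mathscr L(OX_n)\to P_o(X_n)$, while Theorem \ref{t1} gives a semigroup isomorphism $T\mathscr L(OX_n)\cong OX_n$. Thus it suffices to show that an isomorphism of normal categories induces an isomorphism of their normal cone semigroups, i.e. that $TP_o(X_n)\cong T\mathscr L(OX_n)$; composing the two isomorphisms then yields the claim.

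First I would define the induced map $TF:T\mathscr L(OX_n)\to TP_o(X_n)$ by pushing cones forward along $F$: for a normal cone $\gamma$ with vertex $c_\gamma$, let $(TF)(\gamma)$ send each object $vF(c)$ of $P_o(X_n)$ to the morphism $F(\gamma(c))$. I would check that this is a well-defined cone using that $F$ is inclusion preserving, so that condition $(2)$ of Definition \ref{d1} transfers via $j^{vF(c')}_{vF(c)}\,F(\gamma(c'))=F(j^{c'}_c\gamma(c'))=F(\gamma(c))$, and that it is normal because $F$, being fully faithful and a bijection on vertices, carries the isomorphism $\gamma(c):c\to c_\gamma$ (for $c\in M_\gamma$) to an isomorphism in $P_o(X_n)$. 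Since $F$ is an isomorphism of categories, the functor $F^{-1}$ induces the inverse map, so $TF$ is a bijection.

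The main work, and the step I expect to be the principal obstacle, is verifying that $TF$ respects the binary operation \eqref{e1}, namely $TF(\gamma\cdot\sigma)=TF(\gamma)\cdot TF(\sigma)$. Because the product $\gamma\cdot\sigma=\gamma*(\sigma(c_\gamma))^{\circ}$ is defined through the epimorphic component $(\sigma(c_\gamma))^{\circ}$, what must be checked is that $F$ preserves epimorphic components, and hence the $*$-action of an epimorphism on a cone. This reduces to $F$ preserving normal factorizations: being inclusion preserving it sends inclusions to inclusions, and by its bijectivity on vertices together with fully faithfulness it sends retractions to retractions and isomorphisms to isomorphisms, so a normal factorization $f=quj$ in $\mathscr L(OX_n)$ maps to a normal factorization $F(f)=F(q)F(u)F(j)$ in $P_o(X_n)$, whence $F(f^{\circ})=F(f)^{\circ}$. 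Granting this, functoriality gives $F(\gamma(a)f)=F(\gamma(a))F(f)$, which shows $TF(\gamma*f^{\circ})=TF(\gamma)*F(f)^{\circ}$; specializing $f=\sigma(c_\gamma)$ then delivers the homomorphism property.

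Finally I would assemble the pieces: $TF$ is a semigroup isomorphism $T\mathscr L(OX_n)\to TP_o(X_n)$, and Theorem \ref{t1} gives $T\mathscr L(OX_n)\cong OX_n$, so $TP_o(X_n)\cong OX_n$, as asserted.
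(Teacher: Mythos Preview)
Your proposal is correct and follows essentially the same route as the paper: the paper simply asserts, in the sentence preceding the theorem, that since $P_o(X_n)\cong \mathscr L(OX_n)$ as normal categories their cone semigroups are isomorphic, and then invokes Theorem~\ref{t1}. You have merely supplied the details of the step the paper treats as immediate, namely that a normal-category isomorphism $F$ induces a semigroup isomorphism $TF$ by preserving inclusions, retractions, isomorphisms, and hence epimorphic components.
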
 
	\begin{rmk}
		All normal cones in $P_o(X_n)$ can be described as follows. Let $\gamma$ be a normal cone in $P_o(X_n)$ with vertex $A\subseteq X.$ Then let $\alpha: X_n\to X_n$ be defined as follows. $$ (x)\alpha = (x)\gamma(\{x\}),\text{ for all } x\in X_n.$$ Then arguing on similar lines as in the proof of Proposition \ref{p1}, we can see that  $\alpha\in OX_n$ and $\gamma =\rho^\alpha.$ Observe that $TP_o(X_n)$ is a representation of the semigroup $OX_n$; wherein an element  in $OX_n$ is represented as a normal cone in $P_o(X_n),$ which is nothing but a generalization of normal mapping
		of Grillet\cite{pag}.
	\end{rmk}
	\section{The Category of Partitions of a set}\label{s5}
	
	A partition $\pi$ of  a set $X$ is a family of subsets $A_i$ of $X$ such that $\cup A_i= X$ and $A_i\cap A_j=\phi\text{ for }i\neq j.$ A partition is said to be non-identity if at least one $A_i $ has more than one element. Since $X_n=\{1\leq2\leq\cdots\leq n\}$ is a partially ordered set, there are partitions of $X_n$ for which each $A_i$ is an interval. We call such partitions of $X_n$ as ordered partitions and they can be regarded as a finite chain of length $m\leq n$ as follows.\par
	Let $\pi=\cup_{i=1}^{m}A_i$ be an ordered partition. Then
	 $A_1$ consists of first $k_1$ elements of $X_n,\text{ say, } 1,2,\cdots k_1$ and $A_2$ consists of next $k_2$ elements, $k_1+1, k_1+2,\cdots k_1+k_2$ and so on.  Thus each $A_i$ consists of $k_i$ elements and $k_1+k_2+\cdots +k_m= n.$ For this reason, we deonte a non-identity ordered partition $\pi$ of $X_n$ as $\pi=\{A_1\leq A_2\leq \cdots \leq A_m: m<n\}$ and $\pi= \cup_{i=1}^m A_i. $
	\par In this article we consider only non-identity ordered partitions of $X_n$ so that we can consider them as partially ordered sets. Given a non-identity partition $\pi$ of $X_n$, we denote by $\bar\pi$ the set of all order-preserving transformations  from $\pi$ to $X_n.$ Let $\pi_1,\pi_2$ be two ordered partitions of $X_n$ and let $\eta$ be an order-preserving transformation from $\pi_2\text{ to } \pi_1.$ We define $\eta^*:\bar\pi_1\to \bar\pi_2$ by $(\alpha)\eta^*=\eta\alpha\text{ for every }\alpha \in \bar\pi.$ Now the category of ordered partitions $\prod_o(X_n)$ of the chain $X_n$ has the vertex set 
	\begin{center}
		$ v\prod_o(X_n) =\{\bar \pi: \pi \text{ is a non-identity ordered partition of } X_n\}$
	\end{center}
	
	and a morphism in $ \prod_o(X_n)$ from $\bar{\pi_1}$ to $\bar{\pi_2}$ is given by $\eta^*$
	as defined above where $\eta: \pi_2\to \pi_1$. Observe that 
	$\eta: \pi_2\to \pi_1$ gives a morphism from $\bar{\pi_1}\to \bar{\pi_2};$ and it is precisely for this reason, the category of ordered partitions is defined as above.  
	
	\par If $\pi_1=\{A_i: i\in I\}$ and $\pi_2=\{B_j: j\in J\},$ define a partial order on $\prod_o(X_n)$ as $$ \bar{\pi_1}\leq \bar{\pi_2} \text{ if and only if } \pi_2 \leq \pi_1;\text{ where }$$ $$ \pi_2\leq\pi_1 \text{ if and only if for each }j, B_j\subseteq A_i\text{ for some } i. $$
	If $\pi_2\leq \pi_1,$ then the map $v: B_j\to A_i$ is the inclusion map from $\pi_2$ to $\pi_1$ and $v^*: \bar{\pi_1}\to \bar{\pi_2}$ is a morphism in $\prod_o(X_n).$ We consider $v^*$ as the inclusion morphism from $\bar{\pi_1} \text{ to }\bar{\pi_2}$. If $v^*= j^{\bar{\pi_2}}_{\bar{\pi_1}}: \bar{\pi_1}\to \bar{\pi_2}$ is the inclusion morphism, then there exists a retraction $ \zeta^*: \bar{\pi_2} \to \bar{\pi_1}$, $i.e.,$ $v^*\zeta^*= 1_{\bar{\pi_1}.}$ Here $\zeta: \pi_1\to \pi_2$ given by $(A_i)\zeta = B_j$ where $B_j\in \pi_2 $ chosen such that  $B_j\subseteq A_i $ so that $\zeta$ is a well defined order-preserving transformation. Thus every inclusion $v^*$ splits.\par
	
	Now if we define $P$ as the subcategory of $\prod_o(X)$ with $vP = v\prod_o(X)$ and morphisms as inclusion morphisms $v^*,$ then we can verify that $(\prod_o(X),P)$ forms a category with subobjects. The next theorem gives a normal
	factorization of a morphism in $\prod_o(X)$.
	\begin{thm}\label{t1}
		Let $\eta^*$ be a morphism from $\bar{\pi_1}$ to $\bar{\pi_2}$ in $\prod_o(X_n)$ with $\eta:\pi_2\to\pi_1$ . Then there exists ordered partitions $\pi_\sigma$ and $\pi_\gamma$  and $v:\pi_2\to\pi_\sigma, u: \pi_\sigma\to \pi_\gamma $ and $\zeta: \pi_\gamma\to\pi_1$ such that $\eta^*= \zeta^*u^*v^*$ such that $\zeta^*$ is a retraction, $u^*$ is an isomorphism and $v^*$ is an inclusion. 
	\end{thm}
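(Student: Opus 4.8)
The plan is to reproduce, at the level of ordered partitions, the classical retraction--isomorphism--inclusion factorization, exploiting the contravariant composition rule for the starred morphisms. First I would record that rule: since transformations act on the right and compose left to right, for order-preserving maps $f,g$ of partitions whose composites make sense one has $f^*g^*=(gf)^*$, and hence $\zeta^*u^*v^*=(vu\zeta)^*$. Consequently it suffices to factor the underlying order-preserving map $\eta:\pi_2\to\pi_1$ as $\eta=vu\zeta$ with $v,u,\zeta$ of the three prescribed kinds; the identity $\eta^*=\zeta^*u^*v^*$ then follows formally.

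Regard $\pi_2=\{B_1\leq\cdots\leq B_p\}$ and $\pi_1=\{A_1\leq\cdots\leq A_q\}$ as chains, so that $\eta$ induces a non-decreasing map on block indices whose fibres are intervals of $\{1,\dots,p\}$ and whose image is a sub-chain $\{A_i:i\in I'\}$ of $\pi_1$. I would build $\pi_\sigma$ by merging, for each value of $\eta$, the consecutive blocks $B_j$ lying in one fibre into a single block. Since every $B_j$ is an interval and the blocks in a fibre are consecutive, each merged block is again an interval, so $\pi_\sigma$ is a genuine ordered partition coarsening $\pi_2$. Then $\pi_2\leq\pi_\sigma$, the block-inclusion map $v:\pi_2\to\pi_\sigma$ is order-preserving, and by the discussion preceding the theorem $v^*$ is an inclusion.

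Dually I would build $\pi_\gamma$ by partitioning $\{A_1,\dots,A_q\}$ into $|I'|$ consecutive intervals, each containing exactly one image block $A_i$ with $i\in I'$, absorbing any non-image blocks at the ends into the nearest group. This makes $\pi_\gamma$ an ordered partition coarsening $\pi_1$, so $\pi_1\leq\pi_\gamma$, and the map $\zeta:\pi_\gamma\to\pi_1$ selecting the unique image block in each group is the representative-choosing map whose star is a retraction, exactly as in the splitting construction above. By construction $\pi_\sigma$ and $\pi_\gamma$ have the same number of blocks, namely $|I'|=|\mathrm{Im}\,\eta|$, so the unique order-preserving bijection $u:\pi_\sigma\to\pi_\gamma$ between two chains of equal length is an isomorphism and $u^*$ is an isomorphism.

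Finally I would verify $\eta=vu\zeta$ blockwise: a block $B_j$ is sent by $v$ to its fibre block in $\pi_\sigma$, by $u$ to the corresponding group of $\pi_\gamma$ (the correspondence respects order because both chains are indexed compatibly with the non-decreasing index map of $\eta$), and by $\zeta$ to the image block $A_{\eta(j)}$ singled out in that group, giving $(B_j)vu\zeta=(B_j)\eta$. Hence $\eta^*=(vu\zeta)^*=\zeta^*u^*v^*$, as required. The main obstacle I anticipate is bookkeeping rather than depth: one must check that the merging and grouping really produce interval blocks, so that $\pi_\sigma$ and $\pi_\gamma$ are legitimate ordered partitions, and that $u$ matches fibres to groups consistently with $\eta$. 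The order-preserving hypothesis on $\eta$, which forces its fibres to be intervals, is precisely what makes all three checks go through.
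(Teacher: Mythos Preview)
Your proposal is correct and follows essentially the same route as the paper: the paper too reduces to factoring the underlying map as $\eta=vu\zeta$, takes $\pi_\sigma$ to be the coarsening of $\pi_2$ by the fibres of $\eta$, builds $\pi_\gamma$ by grouping consecutive blocks of $\pi_1$ around each image block (absorbing the leading non-image blocks into the first group and the trailing ones into the last), defines $\zeta$ by picking the image block in each group, and lets $u$ be the induced order bijection. The only cosmetic difference is that the paper writes $u$ explicitly as $[x]_\sigma u=[\,[x]_{\pi_2}\eta\,]_\gamma$ rather than invoking the unique order-isomorphism of equal-length chains, and fixes one specific grouping for $\pi_\gamma$ rather than leaving the absorption of end blocks flexible.
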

	\begin{proof}
		For each ordered partition $\pi =\{C_1\leq C_2\leq \cdots\leq C_k: k\leq n\}$ and $x\in X_n,$ we denote by $[x]=[x]_\pi $ the interval $C_k\in \pi$ such that $x\in C_k.$ That is $[x]_\pi$ is the equivalence class of $x$ in $\pi,$ considering $\pi$ as an equivalence relation. Let $\sigma =\sigma_\eta$ be the equivalence relation on $X_n$ defined as follows. $$\sigma =\{(x,y): [x]_{\pi_2}\eta = [y]_{\pi_2}\eta\}.$$ Then as equivalence relations $\pi_2\subseteq \eta.$ Let $v:\pi_2\to \sigma$ be the inclusion given by $[x]_{\pi_2}\to [x]_\sigma$. Since $ v:\pi_2 \to \sigma $ is the  inclusion map, we see that  $v^*: \bar\sigma \to \bar{\pi_2}$ is the inclusion morphism. Let $\gamma =\gamma_\eta$ be the partition of $X_n$ defined as follows. 
		Let $\pi_1 =\{A_1\leq A_2\leq \cdots \leq A_k\}$. Then $Im\; \eta = \{A_i: i\in I\subseteq\{1,2,\cdots k\}\}.$ Let $Im\;\eta = \{A_{i_1}\leq A_{i_2}\leq \cdots \leq A_{i_m}\}.$ Then define 
		$$ \gamma = C_{i_1}\cup C_{i_2}\cup \cdots \cup C_{i_m}, \text{  where } $$

		\begin{equation}
			\begin{split}
				C_{i_1}= & A_1\cup A_2\cup \cdots \cup A_{i_1}\\
				C_{i_2} = & A_{i_1+1}\cup A_{i_1+2}\cup \cdots \cup A_{i_2} \\
				\vdots\qquad &\vdots\qquad\qquad\vdots\\
				C_{i_{m-1}}= & A_{i_{m-2}+1}\cup\cdots \cup A_{i_{m-1}} \\
				C_{i_m} =& A_{i_{m-1}+1}\cup \cdots \cup A_{i_m}\cup A_{i_m+1} \cup\cdots\cup A_k.  
			\end{split}		
		\end{equation}
		Then clearly $\gamma$ is an ordered partition of $X_n$ and $\pi_1\subseteq \gamma.$ Let $\zeta:\gamma\to \pi_1$ be defined as $(C_{i_j})\zeta =A_{i_j}$ for $j=1,2,\cdots,m.$ Then we see that $\zeta^*$ is a retraction from $\bar\pi_1$ to $\bar\gamma.$ Now define $u:\sigma \to \gamma$ as $[x]_\sigma u=[ [x]_{\pi_2}\eta]_\gamma.
		$ Clearly $u$ is well defined and is an order-preserving bijection. Since $u$ is a bijection $u^*:\bar\gamma\to \bar\sigma$ is also a bijection and so is an isomorphism in $\prod_o(X_n).$ To show that $\eta^*=\zeta^*u^*v^*$ for every $[x]_{\pi_2}\in \pi_2,$ $$ ([x]_{\pi_2})vu\zeta = ([x]_\sigma)u\zeta = [[x]_{\pi_2}\eta]_\gamma\zeta= [x]_{\pi_2}\eta.$$
		Hence, for any $\alpha\bar{\pi_1},$ 
		$$ (\alpha)\eta^*= (\alpha)(vu\zeta)^* =vu\zeta\alpha =(\alpha)\zeta^*u^*v^*.$$ Thus $\eta^*=\zeta^*u^*v^*$ and so every morphism in $\prod_o(X_n)$ has a normal factorization.
		\end{proof}
		\begin{thm}
			$\prod_o(X_n)$ is a normal category.
		\end{thm}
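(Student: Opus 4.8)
The plan is to verify the three defining axioms of a normal category for $\prod_o(X_n)$, two of which are already available. The preceding theorem shows that every morphism $\eta^{*}$ admits a normal factorization $\eta^{*}=\zeta^{*}u^{*}v^{*}$, which is axiom (1). Axiom (2), that every inclusion splits, was established in the construction of $\prod_o(X_n)$ as a category with subobjects, where for each inclusion $v^{*}=j^{\bar\pi_2}_{\bar\pi_1}$ the retraction $\zeta^{*}$ with $v^{*}\zeta^{*}=1_{\bar\pi_1}$ was produced. Hence the only remaining task is axiom (3): for each vertex $\bar\pi$ I must exhibit a normal cone $\gamma$ with vertex $\bar\pi$ satisfying $\gamma(\bar\pi)=1_{\bar\pi}$.

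To construct this cone, fix $\bar\pi$ with $\pi=\{A_1\leq A_2\leq\cdots\leq A_m\}$ and let $s\colon\pi\to X_n$ be the order-preserving section sending each block to its least element, $(A_i)s=\min A_i$. For an arbitrary object $\bar\sigma$ I will define $\eta_\sigma\colon\pi\to\sigma$ by $(A)\eta_\sigma=[(A)s]_\sigma$, the block of $\sigma$ containing the fixed representative $(A)s$, and then set $\gamma(\bar\sigma)=\eta_\sigma^{*}\colon\bar\sigma\to\bar\pi$. Since $s$ is order-preserving and passing to the containing $\sigma$-block preserves order on the intervals of $X_n$, each $\eta_\sigma$ is order-preserving, so each $\gamma(\bar\sigma)$ is a genuine morphism of $\prod_o(X_n)$. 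Moreover $s$ is a section, so $(A)\eta_\pi=[(A)s]_\pi=A$, giving $\eta_\pi=1_\pi$ and hence $\gamma(\bar\pi)=1_{\bar\pi}$; this simultaneously makes the component at $\bar\pi$ an isomorphism, so $\gamma$ is normal, and discharges the extra requirement in axiom (3).

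The substantive point to check is the cone compatibility: whenever $\bar\sigma\subseteq\bar\sigma'$ I need $j^{\bar\sigma'}_{\bar\sigma}\gamma(\bar\sigma')=\gamma(\bar\sigma)$. Here $\bar\sigma\subseteq\bar\sigma'$ means $\sigma'$ refines $\sigma$, and the inclusion $j^{\bar\sigma'}_{\bar\sigma}=v^{*}$ comes from the collapsing map $v\colon\sigma'\to\sigma$ sending each $\sigma'$-block to its containing $\sigma$-block. Translating through the contravariant composition of $\prod_o(X_n)$, namely $v^{*}\eta_{\sigma'}^{*}=(\eta_{\sigma'}v)^{*}$, the compatibility reduces to the equality of maps $\eta_{\sigma'}v=\eta_\sigma\colon\pi\to\sigma$. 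This holds because the representative $(A)s$ is chosen once and for all: $(A)\eta_{\sigma'}v$ is the $\sigma$-block containing the $\sigma'$-block $[(A)s]_{\sigma'}$, which is exactly the $\sigma$-block $[(A)s]_\sigma=(A)\eta_\sigma$. Thus $\gamma$ is a normal cone with the required properties, and all three axioms hold.

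The main obstacle is precisely this coherence step (3): producing a compatible family $\{\gamma(\bar\sigma)\}$ rather than a collection of unrelated morphisms. The crux is to untangle the contravariant composition law and recognize that compatibility with the collapsing inclusion $v^{*}$ is equivalent to $\eta_{\sigma'}v=\eta_\sigma$; the fixed-representative definition of $\eta_\sigma$ is engineered to make exactly this identity automatic. Everything else — order-preservation of $\eta_\sigma$, the identity $\eta_\pi=1_\pi$, and normality — is then routine verification.
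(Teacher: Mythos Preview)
Your proof is correct and follows essentially the same approach as the paper. The paper also reduces to axiom~(3), chooses a cross-section of $\pi$ via an order-preserving map $u\colon X_n\to X_n$ constant on each block (you pick the specific section $s(A_i)=\min A_i$), defines the component at $\bar\pi_\alpha$ via $A_i\mapsto [(A_i)u]_{\pi_\alpha}$, and verifies the cone compatibility by the same ``representative-chosen-once'' identity $u_\alpha v=u_0$ that you write as $\eta_{\sigma'}v=\eta_\sigma$.
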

	\begin{proof}
		We have already seen that $\prod_o(X_n)$ is a category with subobjects and every inclusion splits.	Using Theorem \ref{t1}, every morphism  has normal factorization.  Thus to prove $\prod_o(X_n)$ is a normal category, it  suffices to show that, for each $ \bar\pi\in v\prod_o(X_n)$ there exists an idempotent normal cone $\gamma$ with $c_\gamma= \bar\pi.$ Let $\pi= \{A_1\leq A_2\leq\cdots A_k:k< n\}$ be an ordered partition of $X_n$ and $u: X_n\to X_n$ be an order-preserving map such that the image of $u$ is a cross section of $\pi$ and $u$ is a constant on each $A_i.$  For each ordered partition $\pi_\alpha=\{B_1\leq B_2\leq \cdots\leq B_m: m<n\}$, define $u_\alpha: \pi\to\pi_\alpha$ by $A_i\mapsto [(A_i)u]_{\pi_\alpha}$. Consider $\bar u: v\prod_o(X_n)\to \prod_o(X_n)$ defined by $\bar u(\bar{\pi_\alpha})= u_\alpha^*.$ Now we prove that $\bar u$ is an idempotent cone with vertex $\bar\pi.$ For any $\bar\pi_\alpha\in v\pi_o(X_n),\; \bar u( \bar\pi_\alpha)$ is an order- preserving transformation from $\bar\pi_\alpha\text{ to } \bar\pi.$ Hence, $\bar u( \bar\pi_\alpha)\in \prod_o(X_n)(\bar\pi_\alpha,\bar\pi ),\;\;\;\forall \bar\pi_\alpha \in v\prod_o(X_n).$ Let $\pi_0=\{C_1\leq C_2\leq \cdots \leq C_p: p<n\}$ be any ordered partition  with $\pi_\alpha \subseteq \pi_0$, then for each $B_i\in \pi_\alpha$ there exists some $C_r =[B_j]_{\pi_0}\in \pi_0,$ where $[B_j]_{\pi_0}$ denotes the equivalence class containing the interval $B_j$ under the partition $\pi_0.$ The map $v: \pi_\alpha\to \pi_0$ maps $B_j\mapsto [B_j]_{\pi_0}$. Then $u_\alpha v:\pi\to\pi_\alpha$  is given by  $(A_i)u_\alpha v= [(A_i)u]_{\pi_\alpha}v=[(A_i)u]_{\pi_0}.$ This is nothing but $(A_i)u_0.$ Thus $u_\alpha v=u_0 \text{ and } (u_\alpha v)^*= u_o^*$ and $ \bar u(\bar{\pi_0})= j_{\bar{\pi_0}}^{\bar{\pi_\alpha}}\bar u(\bar{\pi_\alpha}).$ Taking $\pi_\alpha=\pi$ in the definition of $u_\alpha, $ we see that $u_\alpha:A_i\to A_i$ so that $\bar u(\bar{\pi_\alpha})$ is the identity. Hence $\bar u$ will be an idempotent normal cone with vertex $\bar{\pi}.$ Thus $\prod_o(X_n)$ is a normal category.
		
	\end{proof}
	In the following we prove that the principal right ideal category $\mathscr R(OX_n)$ of the semigroup $OX_n$ is isomorphic to the category $\prod_o(X_n)$ of ordered partitions of $X_n.$\par
	The ordered partitions of $X_n$ can be identified with the idempotents in $OX_n$ as follows. Let $\pi$ be an ordered partition of $X_n$ and let $A$ be a cross-section of $\pi.$ Then $\pi=\{[a]: a\in A\}.$ Let $e: X_n\to X_n$ be defined by $(x)e =a$ where $a\in A, x\in [a]_\pi.$ Clearly $e$ is an idempotent in $OX_n$ such that $\pi_e=\pi\text{ where }\pi_e\text{ is the partition determined by the kernel of }e.$ Since $e$ is an order-preserving transformation $\pi_e$ will be an ordered partition. For any $\alpha,$ we denote by $\pi_\alpha$ the ordered partition of $X_n$ determined by the $ker\; \alpha.$ We may write $\pi_\alpha =\{(x,y): (x)\alpha=(y)\alpha:\; x,y\in X_n\}$ or $\pi_\alpha=\{(x)\alpha^{-1}: x\in X_n\}.$ Given any $v\in fSe$ where $S=OX_n\; e,f\in E(OX_n)$ define $\eta_v: \pi_f\to\pi_e$ by $([x]_{\pi_f})\eta= ((x)v)e^{-1}.$ It can be shown that $\eta_v$ is well defined and order-preserving when regarding $\pi_f$ and $\pi_e$ as partially ordered sets. Then $\eta_v^*: \bar{\pi_e}\to \bar{\pi_f}$ is a morphism in $\prod_o(X_n).$ Now we define a  functor isomorphism $G:\mathscr R(OX_n)\to\prod_o(X_n) $ by $$ vG(eS)=\bar{\pi_e} \quad \text{and}\quad G(\lambda(e,v,f))= \eta_v^*.$$
	We conclude this section by identifying the principal right ideal category $\mathscr R(S)$ with partition category $\prod_o(X_n).$
	\begin{thm}
		$\mathscr R(OX_n)$ is isomorphic to $\prod_o(X_n)$ as normal categories.
	\end{thm}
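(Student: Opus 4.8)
The plan is to verify that the functor $G:\mathscr R(OX_n)\to\prod_o(X_n)$ defined just above, with $vG(eS)=\bar{\pi_e}$ and $G(\lambda(e,v,f))=\eta_v^*$, is an isomorphism of normal categories, dualizing the argument already used for $\mathscr L(OX_n)\cong P_o(X_n)$. Concretely, I would show that $G$ is a well-defined, inclusion-preserving functor whose object assignment $eS\mapsto\bar{\pi_e}$ is an order-isomorphism, and that $G$ is fully faithful. Since both $\mathscr R(OX_n)$ and $\prod_o(X_n)$ have already been shown to be normal categories, such a functor is automatically an isomorphism of normal categories, so the only real content is to establish these four properties.

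First I would check functoriality. A short computation with the partial left translations shows that the composite of $\lambda(e,v,f):eS\to fS$ and $\lambda(f,w,g):fS\to gS$ is $\lambda(e,wv,g)$, so it suffices to verify $\eta_{wv}=\eta_w\eta_v$ as maps $\pi_g\to\pi_f\to\pi_e$. This is the direct check $([x]_{\pi_g})\eta_w\eta_v=([(x)w]_{\pi_f})\eta_v=[((x)w)v]_{\pi_e}=[(x)(wv)]_{\pi_e}=([x]_{\pi_g})\eta_{wv}$, which, together with the defining rule $(\alpha)\eta^*=\eta\alpha$, yields $G(\lambda(e,v,f))\,G(\lambda(f,w,g))=\eta_v^*\eta_w^*=\eta_{wv}^*=G(\lambda(e,wv,g))$; preservation of identities is immediate. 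For the object map, surjectivity comes from the identification of ordered partitions with idempotents recorded just before the theorem (every $\bar\pi$ is $\bar{\pi_e}$ for some idempotent $e$ with $\pi_e=\pi$), while injectivity and the order-isomorphism follow from Lemma \ref{le1}(1): $\bar{\pi_e}=\bar{\pi_f}$ forces $\ker e=\ker f$, hence $e\,\mathscr R\,f$ and $eS=fS$. For the order correspondence I would note that $eS\subseteq fS$ is equivalent to $fe=e$, which gives $\ker f\subseteq\ker e$, i.e. $\pi_f\le\pi_e$, i.e. $\bar{\pi_e}\le\bar{\pi_f}$, and conversely; I would then confirm that $G$ sends the inclusion $\lambda(e,e,f)$ to the inclusion morphism $v^*$ by checking that $\eta_e$ is the block-inclusion $\pi_f\to\pi_e$, using $x\sim_e(x)e$.

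Finally, fully faithfulness. Faithfulness is short: $\eta_v=\eta_{v'}$ gives $[(x)v]_{\pi_e}=[(x)v']_{\pi_e}$, and since $v,v'\in fSe$ satisfy $ve=v$ and $v'e=v'$, one gets $(x)v=((x)v)e=((x)v')e=(x)v'$ for all $x$, so $v=v'$ and the two morphisms coincide. The main obstacle is fullness: given an order-preserving $\eta:\pi_f\to\pi_e$, I must produce $v\in fSe$ with $\eta_v=\eta$. The plan is to define $(x)v$ to be the unique point of $\mathrm{Im}\,e$ lying in the block $([x]_{\pi_f})\eta$; the real work is then to verify that this $v$ is order-preserving (this uses that $\eta$ is order-preserving and that the blocks of an ordered partition are intervals, so the chosen $\mathrm{Im}\,e$-representatives increase with the blocks) and that $v=fve\in fSe$ (because $v$ is constant on the $\pi_f$-blocks, giving $fv=v$, and takes values in $\mathrm{Im}\,e$, giving $ve=v$). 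Keeping the contravariant bookkeeping straight throughout—$\eta:\pi_f\to\pi_e$ inducing $\eta^*:\bar{\pi_e}\to\bar{\pi_f}$, and $v\in fSe$ inducing $\lambda(e,v,f):eS\to fS$—is the most error-prone part of the argument, so I would state each direction explicitly before assembling the pieces into the claimed isomorphism.
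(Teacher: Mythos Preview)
Your proposal is correct and follows exactly the route the paper indicates: the paper defines the functor $G$ with $vG(eS)=\bar{\pi_e}$ and $G(\lambda(e,v,f))=\eta_v^*$ and then, in lieu of a proof, simply refers the reader to the analogous argument in \cite{np}; you supply precisely that verification (functoriality, order-isomorphism on objects, inclusion preservation, and full faithfulness), dualizing the $\mathscr L(OX_n)\cong P_o(X_n)$ argument as the paper intends.
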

	The above theorem can be proved by imitating the same proof technique as that of Theorem 18. of \cite{np}. 
	
	\begin{prop}
		The semigroup $OX_n$ is isomorphic to a subsemigroup of $T\mathscr R(OX_n)$.
	\end{prop}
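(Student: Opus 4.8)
The plan is to obtain the embedding by dualizing, in the principal right ideal category $\mathscr R(OX_n)$, the argument that produced the isomorphism $T\mathscr L(OX_n)\cong OX_n$. For each $\alpha\in OX_n$ I would first construct the principal cone $\lambda^\alpha$ in $\mathscr R(OX_n)$, the left-handed analogue of the cone $\rho^\alpha$ of Proposition~\ref{p22}: fixing $f\in E(\mathscr R_\alpha)$ and setting $\lambda^\alpha(eS)=\lambda(e,\alpha e,f)$ for each $e\in E(OX_n)$, so that every component acts as left translation by $\alpha$. Using the left--right dual of Lemma~\ref{l1} --- that two morphisms $\lambda(e_A,v,e_B)$ and $\lambda(e_A',w,e_B')$ agree exactly when $v$ and $w$ induce the same left translation on the relevant right ideal --- I would verify that $\lambda^\alpha$ is a well-defined normal cone with vertex $\alpha S$, and, mirroring Proposition~\ref{p1}, that every normal cone in $\mathscr R(OX_n)$ is of this form.

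I would then set $\psi\colon OX_n\to T\mathscr R(OX_n)$, $\psi(\alpha)=\lambda^\alpha$, and establish the two facts that give the conclusion. Injectivity is the exact dual of the computation in the proof of $T\mathscr L(OX_n)\cong OX_n$: if $\lambda^\alpha=\lambda^\beta$, comparing components at the objects $e_xS$ whose kernel class of $x$ is a singleton and applying the dual of Lemma~\ref{l1} forces $x\alpha=x\beta$ for all $x\in X_n$, whence $\alpha=\beta$. For the operation I would evaluate the cone product $\lambda^\alpha\cdot\lambda^\beta=\lambda^\alpha*(\lambda^\beta(c_{\lambda^\alpha}))^\circ$ by passing to the epimorphic component of $\lambda^\beta$ at the vertex $\alpha S=c_{\lambda^\alpha}$; because each component is a left translation and left translations compose in the reverse order, this product simplifies to $\lambda^{\beta\alpha}$. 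Hence $\psi$ preserves the semigroup structure up to this reversal, and $\psi(OX_n)$ is a subsemigroup of $T\mathscr R(OX_n)$.

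The step I expect to be the main obstacle is exactly this reversal. The principal-cone assignment is naturally contravariant, so $\psi$ is an anti-homomorphism and $T\mathscr R(OX_n)$ is anti-isomorphic to $OX_n$; producing a covariant copy of $OX_n$ (rather than of its opposite) inside $T\mathscr R(OX_n)$ is therefore not automatic and is where the genuine work lies. My intended remedy is to leave the translation picture and argue inside the partition model, using the identification $\mathscr R(OX_n)\cong\prod_o(X_n)$ together with the explicit idempotent cones $\bar u$ built in the proof that $\prod_o(X_n)$ is normal; there the vertices are the partition objects $\bar\pi$ and the orientation of the cone product is governed by kernels rather than by the order of left translations, which is what one must control to pin down that the resulting subsemigroup is isomorphic to $OX_n$ itself. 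Once that orientation is settled, injectivity together with the structure-preserving property of $\psi$ yields the stated embedding.
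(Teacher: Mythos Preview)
Your overall setup---defining $\psi(\alpha)=\lambda^\alpha$ and checking that it is an injective structure-preserving map---matches the paper's. But two parts of your plan diverge from the paper, and one of them is a genuine gap.

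For the homomorphism property, the paper simply invokes the left--right dual of Proposition~\ref{p22}: $\alpha\mapsto\lambda^\alpha$ is a homomorphism, full stop. Your worry that the map is only an anti-homomorphism, and your proposed detour through the partition model $\prod_o(X_n)$ and the cones $\bar u$ to repair the orientation, are not part of the paper's argument; whatever subtleties exist here, the paper treats them as absorbed by the duality and does not elaborate. Your claim that \emph{every} normal cone in $\mathscr R(OX_n)$ is principal is likewise not needed (and not asserted) for the proposition, which only requires an embedding.

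The real gap is your injectivity step. You propose to dualize the $\mathscr L$-side trick of evaluating at the rank-one objects $Se_x$ to read off $x\alpha$. On the $\mathscr R$-side this fails: objects of $\mathscr R(OX_n)$ are indexed by kernels, not images, and \emph{all} rank-one idempotents (the constant maps $e_x$) share the same kernel, so there is only a single minimal object $e_xS$ and its component carries no information separating $\alpha$ from $\beta$. More generally, the component $\lambda^\alpha(eS)$ is determined by $\alpha e$, and the fact that $\{x\}$ happens to be a $\ker e$-class tells you nothing about the value $x\alpha$; the dual of Lemma~\ref{l1} identifies two $\lambda$-morphisms when they induce the same map on kernel classes of the codomain, which is not the pointwise condition $x\alpha=x\beta$ you are aiming for. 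The paper therefore argues quite differently: assuming $\alpha\neq\beta$ with $\alpha\,\mathscr R\,\beta$ (so $\ker\alpha=\ker\beta$), it writes $\operatorname{Im}\alpha=\{x_1<\cdots<x_k\}$ and $\operatorname{Im}\beta=\{y_1<\cdots<y_k\}$, locates the first index $i$ with $x_i\neq y_i$, say $x_i<y_i$, and constructs the explicit rank-two idempotent
\[
(x)e=\begin{cases} x_i & \text{if } x\le x_i,\\ x_k & \text{if } x\ge x_i+1,\end{cases}
\]
for which $\alpha e\neq\beta e$; hence $\lambda^\alpha(eS)\neq\lambda^\beta(eS)$. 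That tailored separating idempotent is the idea missing from your plan.
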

	\begin{proof}
	Consider the map $\phi: OX_n\to T\mathscr R(OX_n) $ given by $\alpha\mapsto \lambda^\alpha.$ Then $\phi$ is a homomorphism by  dual of the Proposition $\ref{p22}.$  Now we need to show that $\phi$ is faithful. For, let $\alpha,\beta\in OX_n$ such that $\alpha\neq\beta $ and $\alpha\mathscr R\beta.$ Since  $\alpha\mathscr R\beta$ we have $ker\; \alpha=ker\;\beta.$ Suppose $ Im\;\alpha=\{x_1\leq x_2\leq\cdots\leq x_k\}$ and $Im\;\beta=\{y_1\leq y_2\leq\cdots\leq y_k\}$,  each $x_i$'s and each $y_i$'s are distinct.  The partition of $X_n$ determined by the kernel of $\alpha\text{ and } \beta$ be $\pi= A_1\leq A_2\leq \cdots\leq A_k.$ Then $(A_i)\alpha=x_i$ and $(A_i)\beta =y_i \text{ for } i=1,2,\cdots k.$
	Let $i$ be such that $x_i\neq y_i$ and $x_j=y_j \;\forall \;j<i.$ Then $x_i<y_i$ or $y_i< x_i.$	
	 Without loss of generality  let $x_i<y_i.$  Using this $x_i$ we define an idempotent transformation $e\in OX_n$ such that $\lambda^\alpha(eS)\neq \lambda^\beta(eS). $ Define $e: OX_n\to OX_n$ by 
	\begin{equation}
		e(x) = \begin{cases}
				x_i & \text{ if }   x\leq x_{i}.\\
			x_k & \text{ if } x\geq x_i+1.
		\end{cases}		
	\end{equation}
We can observe that $\lambda^\alpha(eS)= \lambda^\beta(eS)$ if and only if $\alpha e=\beta e$ which is not possible. Thus $\lambda^\alpha\neq \lambda^\beta $ whenever $\alpha\neq\beta$ and $\phi$ is a faithful representation of $OX_n$ to $T\mathscr R(OX_n).$
	\end{proof}


\begin{thebibliography}{99}
		
		\bibitem{com}P. A. Azeef Muhammed and A. R. Rajan. Cross-connections of completely simple semigroups.Asian-European J. Math., 09(03):1650053, 2016.
		\bibitem{pa} P. A. Azeef Muhammed and A. R. Rajan. Cross-connections of the singular transformation semigroup. J.
		Algebra Appl., 2017. DOI: 10.14232/actasm-017-044-z
		
		\bibitem{am} P.A. Azeef Muhammed, “Cross-connections of linear transformation semigroups”, Semigroup Forum 97(3)
		(2018), 457-470.
		\bibitem{pz} P.A. Azeef Muhammed, “Cross-connections and variants of the full transformation semigroup”, Acta Sci.
		Math. (Szeged) 84(3-4) (2018), 377-399.
		\bibitem{pgr} P.A. Azeef Muhammed, P. G. Romeo, K. S. S. Nambooripad. Cross connection structure of concordant semigroups.
		\bibitem{az}P. A. Azeef Muhammed and M. V. Volkov. Inductive groupoids and cross-connections of regular semi-
		groups. Acta Math. Hungar., 2018. DOI: 10(1):262–268, 1975.
		\bibitem{ah}A.H.Clifford and G.B.Preston, The algebraic theory of semigroups vol 1 , Math
		Surveys No.7, American Mathematical Society, Providence,R.I.(1961)
		\bibitem{easdown} D. Easdown, Biordered sets come from semigroups, J. Algebra 96 (1985), 581–591.
		\bibitem{gracinda}  Gomes, G.M.S., Howie, J.M.: On the ranks of certain semigroups of order-preserving transformations.
		Semigroup Forum 45, 272–282 (1992)
		\bibitem{pag}P. A. Grillet. Structure of regular semigroups: Cross-connections. Semigroup Forum, 8:254–259, 1974.
		\bibitem{hall}Thomas Eric Hall. On regular semigroups. Journal of algebra, 24(1):1–24, 1973.
		\bibitem{jmh}J.M.Howie, Fundamentals of semigroup theory, Clarendon Press, Oxford. ISBN 0-19-851194-9(1995).
		\bibitem{ams}K. S. S. Nambooripad, Structure of regular semigroups. I, Mem. Amer. Math. Soc. 22 (1979), no.
		224.
		
		\bibitem{kss} K. S. S. Nambooripad. Structure of Regular Semigroups. II. Cross-connections. Publication No. 15. Centre
		for Mathematical Sciences, Thiruvananthapuram, 1989.
		\bibitem{kn} K. S. S. Nambooripad. Theory of Cross-connections. Publication No. 28. Centre for Mathematical Sciences,
		Thiruvananthapuram, 1994.
		\bibitem{kk}K.S.S. Nambooripad : Theory of Regular Semigroups, Sayahna Foundation Trivandrum,
		2018.
		\bibitem{Mac} Saunders Mac Lane: Categories for the Working Mathematician, 
		Second edition, 0-387-9803-8, Springer-Verlag New york, Berlin Heidelberg Inc., 
		1998.
		\bibitem{np} A. R. Rajan and P. A. Azeef Muhamed. Normal category of partitions of a set.  Southeast Asian Bulletin of Mathematics · September 2015
	\end{thebibliography}
\end{document}